\documentclass[11pt,reqno]{amsart}

\usepackage[T1]{fontenc}
\usepackage[utf8]{inputenc}
\usepackage{lmodern}
\usepackage{microtype}

\usepackage{amsmath,amssymb,amsthm}
\usepackage{xcolor}
\usepackage{cite}
\usepackage[
  colorlinks=true,
  citecolor=blue,
  linkcolor=blue,
  urlcolor=blue
]{hyperref}

\numberwithin{equation}{section}
\newtheorem{theorem}{Theorem}[section]
\newtheorem{proposition}[theorem]{Proposition}
\newtheorem{lemma}[theorem]{Lemma}

\title[Dihedral Rigidity by Smooth Approximation]
{Dihedral Rigidity for Convex Polytopes by Smooth Approximation}
\author[Y. Bi]{Yuchen Bi}
\address[Yuchen Bi]{Mathematical Institute, Department of Pure Mathematics,
University of Freiburg, Ernst-Zermelo-Stra{\ss}e 1,
D-79104 Freiburg im Breisgau, Germany}
\email{yuchen.bi@math.uni-freiburg.de}

\hypersetup{
  pdftitle={Dihedral Rigidity for Convex Polytopes by Smooth Approximation},
  pdfauthor={Yuchen Bi}
}

\begin{document}

\begin{abstract}
Following Brendle's smooth approximation approach, we give a proof of Gromov's dihedral rigidity conjecture for convex polytopes. 
\end{abstract}

\maketitle

\section{Introduction}
Gromov's dihedral rigidity conjecture~\cite[Section 2.2]{GromovCorners} states that a smooth metric on a compact convex polytope is flat if its scalar curvature and the mean curvatures of its faces are nonnegative and its interior dihedral angles are no larger than the corresponding Euclidean angles.

    Using minimal hypersurface descent, Li~\cite{Li,LiCorrection,LiPrism} proved the conjecture for several classes of polyhedra of cone or prism type. An index-theoretic approach to the general conjecture for manifolds with polyhedral boundary was developed by Wang, Xie, and Yu~\cite{WangXieYu} and further pursued by Wang and Xie~\cite{WangXie}; a self-contained proof in dimension three was recently given by Wang, Xie, and Yu~\cite{WangXieYuThree}. Brendle~\cite{Brendle} proved the matching-angle case using smooth inner approximation, and Brendle and Wang~\cite{BrendleWang} extended this method to polytopes with acute dihedral angles.

In this paper, we give a short proof of Gromov's dihedral rigidity conjecture based on the smooth approximation method of Brendle~\cite{Brendle} and Brendle--Wang~\cite{BrendleWang}. We begin by fixing notation and conventions. Throughout, gradients of functions, as well as norms and inner products on \(\mathbb R^n\), are Euclidean unless marked by \(g\). We write \(d\) and \(d_g\) for the Euclidean and \(g\)-distances, respectively, and \(d_{\mathrm H}^g\) for the Hausdorff distance induced by \(d_g\).

Let
\[
P=\bigcap_{a=1}^m\{u_a\leq 0\}\subset\mathbb R^n
\]
be a compact convex polytope with nonempty interior. We assume that this representation is irredundant, meaning that none of the inequalities \(u_a\leq 0\) can be omitted. Normalize the affine functions \(u_a\) by \(|\nabla u_a|=1\), and write
\[
F_a=P\cap\{u_a=0\},\qquad N_a=\nabla u_a.
\]
For a face \(G\) of \(P\), let \(G^\circ\) denote its relative interior. Let \(g\) be a smooth metric on a neighborhood of \(P\), and put
\[
\nu_a=\frac{\nabla^g u_a}{|du_a|_g}.
\]
Along \(F_a\), this is the outward unit normal with respect to \(g\). If \(F_a\) and \(F_b\) meet in a codimension-two face, define along that face
\[
\alpha_{ab} =\arccos\langle N_a,N_b\rangle,\qquad\alpha_{ab}^g=\arccos\langle\nu_a,\nu_b\rangle_g.
\]
These are the exterior dihedral angles; the corresponding interior dihedral angles are \(\pi-\alpha_{ab}\) and \(\pi-\alpha_{ab}^g\), respectively. We write \(R_g\) for the scalar curvature of \(g\) and \(H_{F_a}^g\) for the mean curvature of \(F_a\) with respect to \(\nu_a\). Mean curvature is computed with the convention that the Euclidean unit sphere has positive mean curvature.

\begin{theorem}\label{thm:main}
Let \(n\geq 3\). Suppose that \(R_g\geq 0\) on \(P\), that \(H_{F_a}^g\geq 0\) on each \(F_a\), and that \(\alpha_{ab}^g\geq\alpha_{ab}\) along every codimension-two face. Then \(g\) is flat and every \(F_a\) is totally geodesic. Moreover, whenever \(a\neq b\) and \(F_a\cap F_b\neq\emptyset\),
\[
\langle\nu_a,\nu_b\rangle_g=\langle N_a,N_b\rangle\qquad\text{on }F_a\cap F_b.
\]
\end{theorem}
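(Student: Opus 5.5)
We follow the smooth inner approximation scheme of Brendle~\cite{Brendle} and Brendle--Wang~\cite{BrendleWang}. For a small parameter \(\lambda>0\), approximate \(P\) from the inside by smooth convex domains
\[
\Omega_\lambda=\Big\{\sum_{a=1}^m e^{u_a/\lambda}\le \varepsilon(\lambda)\Big\}
\]
(a log-sum-exp smoothing of \(\max_a u_a\)). These domains converge to \(P\) in Hausdorff distance \(d_{\mathrm H}^g\). The boundary \(\Sigma_\lambda=\partial\Omega_\lambda\) has \(g\)-outward normal
\[
\nu=\frac{\sum_a e^{u_a/\lambda}|du_a|_g\,\nu_a}{\bigl|\sum_a e^{u_a/\lambda}|du_a|_g\,\nu_a\bigr|_g},
\]
a convex combination of the face normals. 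Define the Gauss-type map \(N:\Sigma_\lambda\to S^{n-1}\) by the same formula with Euclidean weights, \(N=\sum_a w_a N_a/|\sum_a w_a N_a|\).

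On \(\Sigma_\lambda\) the plan is to solve a boundary value problem for a spinor, or for a harmonic map/Dirac-type equation twisted by \(N\), as in Brendle's approach via the Llarull/Goette--Semmelmann type integral formula. The resulting integral inequality takes the form
\[
0\ \ge\ \int_{\Omega_\lambda}\Big(\tfrac14 R_g\,|s|^2+|\nabla s|^2\Big)+\int_{\Sigma_\lambda}\Big(\tfrac12 H_g - \tfrac12\,\mathrm{tr}(dN)\Big)|s|^2 .
\]
The key computation is to show that, up to errors vanishing as \(\lambda\to0\), the mean curvature \(H^g_{\Sigma_\lambda}\) dominates the "Euclidean curvature" \(\sum_i|dN(e_i)|\) of the comparison map.

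This localizes as follows. Near the interior of a face only one weight matters, and the estimate reduces to \(H^g_{F_a}\ge0\). Near a codimension-two face \(F_a\cap F_b\), the boundary consists of a thin transition region where the weights interpolate between \(a\) and \(b\). There the contribution of \(\nu\) sweeping the arc from \(\nu_a\) to \(\nu_b\) has integrated curvature \(\alpha^g_{ab}\), while \(N\) sweeps an arc of length \(\alpha_{ab}\). The hypothesis \(\alpha^g_{ab}\ge\alpha_{ab}\) is exactly what gives the right sign.

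The main obstacle is the higher-codimension corners, where three or more weights are comparable. For non-acute polytopes the normals \(\nu_a\) there need not be positively independent in a way compatible with the Euclidean ones. The first thing I would try is a monotonicity lemma: on the region where the weights of a set \(A\) of faces are non-negligible, compare the \(g\)-Gram matrix \(\langle\nu_a,\nu_b\rangle_g\) with the Euclidean one via \(\langle\nu_a,\nu_b\rangle_g\le\langle N_a,N_b\rangle\) on pairs. From this, derive that \(|\sum w_a\nu_a|_g\ge|\sum w_aN_a|\) fails only by \(o(1)\). The corner regions have \((n-1)\)-measure \(O(\lambda^2)\) while the curvature there is \(O(\lambda^{-1})\), so their contribution vanishes in the limit; this is the reason the estimate needs only pairwise angle comparison.

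Passing \(\lambda\to0\) along a subsequence, with the spinors normalized in \(L^2\) and bounded in \(H^1\), yields a nontrivial parallel limit spinor on \(P^\circ\). In addition all inequalities must become equalities. Equality forces \(R_g\equiv0\), and the parallel frame built from the parallel spinor and the constant Euclidean frames gives flatness of \(g\). Equality in the boundary term forces the second fundamental form of each \(F_a\) to vanish, so \(H^g_{F_a}=0\) and \(F_a\) is totally geodesic. Equality in the transition regions forces \(\alpha^g_{ab}=\alpha_{ab}\) on \(F_a\cap F_b\), i.e. \(\langle\nu_a,\nu_b\rangle_g=\langle N_a,N_b\rangle\).
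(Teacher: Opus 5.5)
\textbf{There is a genuine gap at the codimension-two faces.} You compare only the \emph{total} turning of $\nu$ across the transition strip ($\alpha^g_{ab}$) with that of $N$ ($\alpha_{ab}$). The boundary term, however, is weighted by $|s|^2$, and you control $|s|^2$ only through $W^{1,2}$ and trace bounds. So you need the \emph{pointwise} bound $(\|dN\|_{\operatorname{tr}}-H^g_{\Sigma_\lambda})_+\le C$. Trading an excess at one place in the strip for a deficit at another would require $|s|^2$ to be nearly constant across a strip whose width is the smoothing scale, and nothing provides that.

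The Euclidean Gauss map violates this bound. Write $\nu\propto\nu_a+\rho x\,\nu_b$ and $N\propto N_a+x\,N_b$ with $\rho=|du_b|_g/|du_a|_g$. The ratio of the rates at which $N$ and $\nu$ turn tends to $\sin\alpha_{ab}/(\rho\sin\alpha^g_{ab})$ as $x\to0$, and to $\rho\sin\alpha_{ab}/\sin\alpha^g_{ab}$ as $x\to\infty$.
\begin{itemize}
\item With Euclidean weights, one of these exceeds $1$ whenever $\rho\neq1$, even for matching angles.
\item With the $g$-weights ($\rho=1$), both equal $\sin\alpha_{ab}/\sin\alpha^g_{ab}$. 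For the cube ($\alpha_{ab}=\pi/2$) this exceeds $1$ as soon as $\alpha^g_{ab}>\pi/2$, which is exactly the case you must rule out.
\end{itemize}
Consequently $(\|dN\|_{\operatorname{tr}}-H^g_{\Sigma_\lambda})_+$ is of the size of the inverse smoothing scale on a strip of comparable width, with transverse integral bounded below by a positive constant. The boundary error therefore behaves like a positive multiple of $\int_{F_a\cap F_b}|s|^2$ instead of tending to zero.

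The paper's remedy is a reparametrization. It sets $\zeta_\lambda=\Gamma(N_a,N_b,s/\alpha)$, where $s$ is the $g$-angle from the parallel-transported $\bar\nu_a$ to the normal and $\alpha=\alpha^g_{ab}(x_{ab})$. Then $\zeta_\lambda$ turns at $\alpha_{ab}/\alpha\le1$ times the rate of the normal, so $\|d\zeta_\lambda\|_{\operatorname{tr}}\le H^g_{\Sigma_\lambda}+C$ pointwise (Lemmas~\ref{lem:s-estimates} and~\ref{lem:zeta-estimate}). The remaining $O(1)$ error on a thin neighbourhood of $E_P$ is then handled by H\"older's inequality and the trace inequality.

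\textbf{Your corner argument also fails.} An error of the size of the inverse smoothing scale on a neighbourhood of $K_P$ of that width is scale-critical. The best control on $|s|^2$ available from $W^{1,2}$ is the $L^4$-trace on two-dimensional slices of $\Sigma_\lambda$ transverse to $K_P$. On each slice this weight has $L^2$-norm of order one, so its contribution is $O(\|s\|_{W^{1,2}}^2)$, not $o(\|s\|_{W^{1,2}}^2)$. Sections concentrating at the corners at that scale show this cannot be improved.

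The proposed Gram-matrix lemma does not help. The hypothesis constrains $\langle\nu_a,\nu_b\rangle_g$ only for pairs meeting in a codimension-two face. Where it applies, it gives, for nonnegative weights, $|\sum_a w_a\nu_a|_g\le|\sum_a w_aN_a|$, which is the reverse of what you state. In any case, no norm comparison of this kind controls $dN$.

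The paper abandons normal-type maps near $K_P$:
\begin{itemize}
\item On $\{r\le\delta_\lambda^2\}$, with $\delta_\lambda=\lambda^{-1/4}$ in its normalization, it uses the radial map $q$. This map has bounded differential and stays in an open hemisphere around $\zeta_\lambda$.
\item It interpolates from $q$ to $\zeta_\lambda$ with a logarithmic cutoff, whose gradient has $L^2$-norm $\lesssim|\log\delta_\lambda|^{-1/2}$ on each slice.
\item A homotopy keeps the degree equal to one, as Proposition~\ref{prop:dirac-existence} requires (Lemmas~\ref{lem:eta} and~\ref{lem:boundary-error-smallness}).
\end{itemize}

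\textbf{The rigidity step is also incomplete.} A nonzero parallel limit does not by itself give flatness. The transition strips disappear in the limit, so ``equality there'' yields nothing. What you need is the limiting relation $c(\nu_a)A=A\omega(N_a)$ on each $F_a$. This is why the paper chooses $\Phi$ so that $\Sigma_\lambda$ coincides with $F_a$ on compact subsets of $F_a^\circ$ and $\eta_\lambda=N_a$ there; your log-sum-exp domains do not have this property. From the relation, Schur's lemma makes $A^*A$ scalar and hence $A$ unitary. Flatness, total geodesicity (by differentiating the relation) and the angle identity (from the Clifford anticommutator) all follow from that.
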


\section{Smooth inner approximations and approximate normals}
In this section, we construct smooth inner approximations \(P_\lambda\) of \(P\) and associated sphere-valued maps
\[
\eta_\lambda:\partial P_\lambda\longrightarrow\mathbb S^{n-1}
\]
that approximate the Euclidean outward unit normal fields of \(\partial P_\lambda\).

\subsection{Construction of \texorpdfstring{\(P_\lambda\)}{P\_lambda}}

We use the smooth approximation of Brendle~\cite{Brendle} and Brendle--Wang~\cite{BrendleWang}, recalled here in a form adapted to our argument. Fix a smooth convex function
\(\Phi:\mathbb R\to[0,\infty)\) such that
\[
\Phi=0\quad\text{on }(-\infty,-2],\qquad\Phi'>0\quad\text{on }(-2,\infty),\qquad\Phi(0)=1.
\]
Define
\[
\mathcal F_\lambda=\sum_{a=1}^m\Phi(\lambda u_a),\qquad P_\lambda=\{\mathcal F_\lambda\leq 1\},\qquad \Sigma_\lambda=\partial P_\lambda.
\]
For \(x\in P\), set
\[
I_\lambda(x)=\{a:u_a(x)>-2/\lambda\}.
\]

We first record two estimates.
\begin{lemma}\label{lem:rounding-estimates}
There exist constants \(C,c>0\), depending only on \(P\) and the metric \(g\), with the following properties. For every nonempty \(I\subset\{1,\ldots,m\}\) such that
\[
F_I:=\bigcap_{a\in I}F_a\neq\emptyset,
\]
one has
\begin{equation}\label{eq:face-distance}
d_g(x,F_I)\leq C\max_{a\in I}|u_a(x)|\qquad (x\in P).
\end{equation}
Moreover, for every sufficiently large \(\lambda\), every \(x\in P\), and every family \((t_a)_{a\in I_\lambda(x)}\) of nonnegative numbers,
\begin{equation}\label{eq:normal-combination}
\left|\sum_{a\in I_\lambda(x)}t_a\,du_a(x)\right|_g\geq c\sum_{a\in I_\lambda(x)}t_a|du_a(x)|_g.
\end{equation}
\end{lemma}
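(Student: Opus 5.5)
The two estimates are of different natures. \eqref{eq:face-distance} is a Hoffman-type error bound for polyhedra. For \eqref{eq:normal-combination}, the point is that for large \(\lambda\) the active normals \(N_a\), \(a\in I_\lambda(x)\), all belong to a face that is actually present, so they lie in a pointed cone. In both cases I would argue Euclidean first and then pass to \(g\) using uniform equivalence. Since \(P\) is compact and \(g\) is smooth on a neighborhood of \(P\), there is \(\Lambda\geq1\) with \(\Lambda^{-1}|v|\leq|v|_g\leq\Lambda|v|\) for vectors at points of \(P\), and likewise for covectors.

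\textbf{Proof of \eqref{eq:face-distance}.} Fix \(I\) with \(F_I\neq\emptyset\). Write
\[
F_I=\{y:\ u_b(y)\leq0\ \forall b,\ -u_a(y)\leq 0\ \forall a\in I\}.
\]
This is a polyhedron given by finitely many affine inequalities. Hoffman's error bound gives \(C_I\) with \(d(x,F_I)\leq C_I\max(\text{violations at }x)\). For \(x\in P\), the only violated inequalities are \(-u_a\leq0\) with \(a\in I\), and their violations are \(-u_a(x)=|u_a(x)|\). Hence \(d(x,F_I)\leq C_I\max_{a\in I}|u_a(x)|\).

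To pass to \(d_g\), let \(y\in F_I\) be the Euclidean nearest point. The segment \([x,y]\) lies in the convex set \(P\), so \(d_g(x,y)\leq\Lambda|x-y|\). Taking \(C=\Lambda\max_I C_I\) over the finitely many \(I\) finishes the estimate. If one prefers to avoid citing Hoffman, a contradiction/compactness argument over the finitely many faces would also work. However, the error bound is the cleanest route, and I expect this to be the only step needing an external input.

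\textbf{Step 1: the active faces are nonempty.} I first show that for \(\lambda\) large, \(F_{I_\lambda(x)}\neq\emptyset\) for every \(x\in P\). Let \(J\) be any index set with \(F_J=\emptyset\). The continuous function \(x\mapsto\max_{a\in J}|u_a(x)|\) vanishes on \(P\) exactly on \(F_J=\emptyset\), so on the compact set \(P\) it has a positive minimum \(\mu_J\). Choose \(\lambda\) so large that \(2/\lambda<\min_J\mu_J\). If \(J\subset I_\lambda(x)\), then \(|u_a(x)|<2/\lambda\) for all \(a\in J\), which is impossible. Hence \(F_{I_\lambda(x)}\neq\emptyset\).

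\textbf{Step 2: a uniform transversal direction.} Fix an interior point \(p\) of \(P\) and set \(\delta=\min_a(-u_a(p))>0\). For \(I=I_\lambda(x)\), pick \(y\in F_I\) and put \(v=y-p\), so \(|v|\leq\operatorname{diam}P\). For \(a\in I\), since \(u_a(y)=0\),
\[
\langle N_a,v\rangle=u_a(y)-u_a(p)=-u_a(p)\geq\delta.
\]
Therefore, for \(t_a\geq0\),
\[
\Bigl|\sum_{a\in I}t_aN_a\Bigr|\geq\frac{1}{|v|}\sum_{a\in I}t_a\langle N_a,v\rangle\geq\frac{\delta}{\operatorname{diam}P}\sum_{a\in I}t_a.
\]

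\textbf{Step 3: back to \(g\).} The covector \(du_a(x)\) has Euclidean norm \(|N_a|=1\), so \(|du_a(x)|_g\leq\Lambda\). The left side of \eqref{eq:normal-combination} is at least \(\Lambda^{-1}\) times the Euclidean norm above. Combining,
\[
\Bigl|\sum_{a\in I}t_a\,du_a(x)\Bigr|_g\geq c\sum_{a\in I}t_a|du_a(x)|_g,\qquad c=\frac{\delta}{\Lambda^2\operatorname{diam}P}.
\]
This \(c\) depends only on \(P\) and \(g\). The only delicate point is Step 1, which is where the hypothesis ``\(\lambda\) sufficiently large'' enters.
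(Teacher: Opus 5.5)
Your proof is correct, but it takes a different route from the paper in both halves.

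For \eqref{eq:face-distance} you cite Hoffman's error bound, whereas the paper proves the special case it needs by hand. It takes $y$ the nearest point of $F_I$ to $x$ and $J$ the active set at $y$. Then $v=x-y$ lies in the normal cone $\operatorname{cone}\{\nabla u_j:j\in J\setminus I\}+\operatorname{span}\{\nabla u_a:a\in I\}$ and satisfies $du_j(v)=u_j(x)\leq 0$ for $j\in J$. A compactness argument over the finitely many pairs $I\subset J$ then yields $|v|\leq c_0^{-1}\max_{a\in I}|du_a(v)|$; the contradiction comes from pairing a unit $w$ with its own cone representation to get $|w|^2\leq 0$. This is essentially Hoffman's proof in this setting. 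Your version is shorter at the price of a standard external citation, while the paper stays self-contained. Your explicit use of the segment $[x,y]\subset P$ to get $d_g\leq\Lambda d$ fills in a step the paper leaves implicit.

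For \eqref{eq:normal-combination}, your Step 1 is unnecessary. The paper pairs $\sum_a t_a\,du_a$ directly with $x-p_0$, using $du_a(x-p_0)=u_a(x)-u_a(p_0)\geq\delta-2/\lambda\geq\delta/2$ for $a\in I_\lambda(x)$ once $\lambda\geq 4/\delta$. So ``$\lambda$ sufficiently large'' enters through an explicit threshold rather than a compactness argument over empty faces, and nothing about Step 1 is actually delicate. Your detour through a point of $F_{I_\lambda(x)}$ gives the bound $\delta$ instead of $\delta/2$. However, it relies on the nonemptiness of $F_{I_\lambda(x)}$, which the paper establishes only later, by the same argument as your Step 1, in the proof of Lemma~\ref{lem:smoothing}.
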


\begin{proof}
For the first estimate, fix \(I\), choose \(y\in F_I\) with \(d(x,y)=d(x,F_I)\), and set \(v=x-y\) and \(J=\{j:u_j(y)=0\}\). Since \(y\) is the nearest point to \(x\) in \(F_I\),
\[
v\in N_{F_I}(y):=\{\xi:\langle\xi,z-y\rangle\leq0\text{ for all }z\in F_I\}.
\]
At \(y\), the set \(F_I\) is defined by \(u_a=0\), \(a\in I\), and the inequalities \(u_j\leq0\), \(j\in J\setminus I\). Hence
\[
C_{I,J}:=\operatorname{cone}\{\nabla u_j:j\in J\setminus I\}+\operatorname{span}\{\nabla u_a:a\in I\}=N_{F_I}(y).
\]
Moreover, \(du_j(v)=u_j(x)\leq0\) for \(j\in J\). We claim that, uniformly over all possible pairs \(I\subset J\),
\[
\max_{a\in I}|du_a(w)|\geq c_0|w|
\]
whenever \(w\in C_{I,J}\) and \(du_j(w)\leq0\) for \(j\in J\setminus I\). Otherwise, by compactness, there would be a fixed pair \(I\subset J\) and a unit vector \(w\in C_{I,J}\) such that
\[
du_a(w)=0\quad(a\in I),\qquad du_j(w)\leq0\quad(j\in J\setminus I).
\]
Writing
\[
w=\sum_{j\in J\setminus I}\mu_j\nabla u_j+\sum_{a\in I}\lambda_a\nabla u_a,\qquad \mu_j\geq0,
\]
and taking the inner product with \(w\) gives \(1=|w|^2\leq0\), a contradiction. Applying the claim to \(v\), we obtain
\[
d(x,F_I)=|v|\leq c_0^{-1}\max_{a\in I}|u_a(x)|.
\]
The equivalence of \(g\) and the Euclidean metric on \(P\) yields~\eqref{eq:face-distance}.

For the second estimate, fix \(p_0\in P^\circ\) and set \(\delta:=\min_a\bigl(-u_a(p_0)\bigr)>0\). If \(\lambda\geq 4/\delta\), then every \(a\in I_\lambda(x)\) satisfies
\[
du_a(x-p_0)=u_a(x)-u_a(p_0)\geq \delta-\frac{2}{\lambda}\geq \frac{\delta}{2}.
\]
Consequently,
\[
\left|\sum_{a\in I_\lambda(x)}t_a\,du_a\right||x-p_0|\geq\frac{\delta}{2}\sum_{a\in I_\lambda(x)}t_a.
\]
Since \(|x-p_0|\) is uniformly bounded on \(P\), this proves~\eqref{eq:normal-combination}.
\end{proof}

We now describe the geometry of \(P_\lambda\) for large \(\lambda\). Set
\[
E_P:=\bigcup_{|I|=2}F_I,\qquad K_P:=\bigcup_{|I|=3}F_I,
\]
where \(I\subset\{1,\ldots,m\}\). From now on, we fix a \(p_0\in P^\circ\).

\begin{lemma}\label{lem:smoothing}
For all sufficiently large \(\lambda\), \(P_\lambda\) is a compact convex domain with smooth boundary \(\Sigma_\lambda\), contained in \(P\). Every segment joining \(p_0\) to a point \(y\in\partial P\) meets \(\Sigma_\lambda\) in exactly one point. The resulting map
\[
\Pi_\lambda:\partial P\longrightarrow\Sigma_\lambda
\]
is bi-Lipschitz with constants independent of \(\lambda\), and
\[
\sup_{y\in\partial P}d_g\bigl(\Pi_\lambda(y),y\bigr)\leq C\lambda^{-1}.
\]
In particular, \(d_{\mathrm H}^g(\Sigma_\lambda,\partial P)\leq C\lambda^{-1}\). Moreover, for every \(x\in\Sigma_\lambda\),
\[
F_{I_\lambda(x)}\neq\emptyset,\qquad d_g\bigl(x,F_{I_\lambda(x)}\bigr)\leq C\lambda^{-1}.
\]
Consequently,
\[
|I_\lambda(x)|\geq2\Longrightarrow d_g(x,E_P)\leq C\lambda^{-1},\qquad|I_\lambda(x)|\geq3\Longrightarrow d_g(x,K_P)\leq C\lambda^{-1}.
\]
Finally, for every \(a\) and every \(V\Subset F_a^\circ\), there exists \(\lambda_V>0\) such that, for all \(\lambda\geq\lambda_V\), \(\Sigma_\lambda\) agrees with \(F_a\) in a fixed neighborhood of \(\overline V\).
\end{lemma}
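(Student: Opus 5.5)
We will use that $\mathcal F_\lambda$ is a sum of convex functions of affine functions, hence convex, so $P_\lambda=\{\mathcal F_\lambda\le 1\}$ is a closed convex set. If $x\in P_\lambda$, then $\Phi(\lambda u_a(x))\le 1=\Phi(0)$ for each $a$. Since $\Phi$ is strictly increasing on $(-2,\infty)$, this forces $u_a(x)\le 0$, so $P_\lambda\subset P$. Conversely, if $u_a(x)\le -2/\lambda$ for all $a$, then $\mathcal F_\lambda(x)=0$. Hence $P_\lambda$ contains the shrunken polytope $\{u_a\le -2/\lambda\ \forall a\}$, which contains $p_0$ in its interior once $\lambda\ge 4/\delta$, with $\delta$ as in Lemma~\ref{lem:rounding-estimates}. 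For smoothness, we show $d\mathcal F_\lambda\neq 0$ on $\Sigma_\lambda$. At $x\in\Sigma_\lambda$ we have $\mathcal F_\lambda(x)=1>0$, so $I_\lambda(x)\ne\emptyset$, and
\[
d\mathcal F_\lambda(x)=\lambda\sum_{a\in I_\lambda(x)}\Phi'(\lambda u_a)\,du_a,
\]
with all coefficients positive. Estimate~\eqref{eq:normal-combination} (or directly $d\mathcal F_\lambda(x-p_0)\ge \lambda\sum\Phi'\cdot\delta/2>0$) shows this is nonzero. So $\Sigma_\lambda$ is a smooth hypersurface bounding the compact convex body $P_\lambda$.

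For the radial projection, note that along the ray $t\mapsto p_0+t(y-p_0)$ the function $\mathcal F_\lambda$ is convex, equals $0$ near $t=0$, and has $\frac{d}{dt}\mathcal F_\lambda>0$ wherever $\mathcal F_\lambda>0$; the latter follows from $du_a(y-p_0)\ge \delta$ for $a\in I_\lambda$ at points of $P$. So it crosses level $1$ exactly once on $[0,1]$, at some $t_\lambda(y)$, and $t_\lambda(y)\le 1$ because $\mathcal F_\lambda(y)\ge\Phi(0)=1$. Since the shrunken polytope lies in $P_\lambda$ and is within Euclidean distance $C/\lambda$ of $P$ (its defining offsets are $2/\lambda$ and the $u_a$ are unit-gradient), we get $1-t_\lambda(y)\le C/\lambda$. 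This gives $d_g(\Pi_\lambda(y),y)\le C\lambda^{-1}$, and the Hausdorff bound follows since $\Pi_\lambda$ is onto $\Sigma_\lambda$ by the same ray argument. For the bi-Lipschitz property, we will use the standard fact that radial projection from $p_0$ between boundaries of convex bodies that contain a fixed ball $B_r(p_0)$ and lie in a fixed ball $B_R(p_0)$ is bi-Lipschitz with constants depending only on $r,R$. Indeed, the radial functions are Lipschitz on the sphere with constant controlled by $R^2/r$, and both boundaries are graphs over $\mathbb S^{n-1}$ of such radial functions. Here $r$ can be taken as $\delta/2$ for $\lambda$ large, which gives constants uniform in $\lambda$.

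For the face statements, let $x\in\Sigma_\lambda$ and $I=I_\lambda(x)$. Take $y=\Pi_\lambda^{-1}(x)\in\partial P$, so $|x-y|\le C/\lambda$. For $a\in I$ we have $-2/\lambda<u_a(x)\le0$. We must show $F_I\neq\emptyset$, and this is the main obstacle: points near several faces need not lie near their common intersection in general, though for a polytope they do at small scales. We will argue by contradiction and compactness. If $F_I=\emptyset$, then by compactness of $P$ there is $\epsilon_I>0$ with $\max_{a\in I}(-u_a)\ge\epsilon_I$ on $P$; otherwise a limit point would lie in $F_I$. This contradicts $\max_{a\in I}(-u_a(x))<2/\lambda$ once $\lambda>2/\min_I\epsilon_I$, the minimum being over the finitely many $I$. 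With $F_I\ne\emptyset$, estimate~\eqref{eq:face-distance} gives $d_g(x,F_I)\le 2C/\lambda$. When $|I|\ge2$ (resp.\ $\ge3$), $F_I$ is contained in some $F_{I'}$ with $|I'|=2$ (resp.\ $3$), $I'\subset I$, hence lies in $E_P$ (resp.\ $K_P$).

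Finally, take $V\Subset F_a^\circ$. There is $\rho>0$ such that $u_b\le-\rho$ for all $b\ne a$ on a neighborhood $U$ of $\overline V$, and we may shrink $U$ so that this holds. For $\lambda>2/\rho$, on $U$ we get $\mathcal F_\lambda=\Phi(\lambda u_a)$. Hence $\Sigma_\lambda\cap U=\{\Phi(\lambda u_a)=1\}\cap U=\{u_a=0\}\cap U=F_a\cap U$, using strict monotonicity of $\Phi$ near $0$.
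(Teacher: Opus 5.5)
Your proposal is correct and follows essentially the same route as the paper. The shared steps are: \(P_\lambda\subset P\) from \(\Phi(\lambda u_a)\le 1=\Phi(0)\); nonvanishing of \(d\mathcal F_\lambda\) on \(\Sigma_\lambda\) by pairing with \(x-p_0\); uniformly bi-Lipschitz radial graphs between \(B_{\delta/2}(p_0)\) and \(B_R(p_0)\); the compactness argument for \(F_{I_\lambda(x)}\neq\emptyset\) followed by \eqref{eq:face-distance}; and localization near \(\overline V\), where \(\mathcal F_\lambda=\Phi(\lambda u_a)\).

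There are a few minor slips, none affecting validity:
\begin{itemize}
\item Along the segment you only get \(du_a(y-p_0)\ge\delta-2/\lambda\) for \(a\in I_\lambda\), not \(\ge\delta\). This still suffices.
\item The bound \(1-t_\lambda(y)\le C/\lambda\) is most cleanly justified as in the paper. Check that \(z=(1-A/\lambda)y+(A/\lambda)p_0\), with \(A>2/\delta\), satisfies \(u_a(z)<-2/\lambda\) for all \(a\). Your vaguer appeal, that the shrunken polytope is Euclidean-close to \(P\), is less direct.
\item The paper's proof also records the uniform bound \(|d\mathcal F_\lambda|_g\ge c\lambda\) on \(\Sigma_\lambda\), which is reused later. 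Your pairing argument gives this too, once you add that \(\sum_a\Phi'(\lambda u_a)\) is bounded below on \(\Sigma_\lambda\).
\end{itemize}
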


\begin{proof}
For all sufficiently large \(\lambda\), \(\mathcal F_\lambda(p_0)=0\). Since \(\mathcal F_\lambda\) is convex and \(\mathcal F_\lambda(x)>1\) whenever \(u_a(x)>0\) for some \(a\),
\(P_\lambda\) is a compact convex domain contained in \(P\).

Values below \(-2\) contribute neither to \(\sum_a\Phi(s_a)\) nor to \(\sum_a\Phi'(s_a)\). The continuous function \(\sum_a\Phi'(s_a)\) is positive on the compact set
\[
\left\{(s_1,\ldots,s_m)\in[-2,0]^m:\sum_a\Phi(s_a)=1\right\}.
\]
Applying \eqref{eq:normal-combination} with \(t_a=\Phi'(\lambda u_a)\), we obtain
\begin{equation}\label{eq:gradient-lower-bound}
|d\mathcal F_\lambda|_g=\lambda\left|\sum_a\Phi'(\lambda u_a)\,du_a\right|_g\geq c\lambda\sum_a\Phi'(\lambda u_a)|du_a|_g\geq c\lambda
\end{equation}
on \(\Sigma_\lambda\). Thus \(\Sigma_\lambda\) is smooth.

Set \(\delta:=\min_a(-u_a(p_0))>0\). If \(|x-p_0|\leq\delta/2\), then \(u_a(x)\leq u_a(p_0)+|x-p_0|\leq-\delta/2\) for every \(a\).
Hence, for large \(\lambda\),
\[
B_{\delta/2}(p_0)\subset P_\lambda\subset P\subset B_R(p_0)
\]
for some \(R>0\). Convex domains lying between two fixed concentric balls have uniformly bi-Lipschitz radial parametrizations. It follows that every segment joining \(p_0\) to \(y\in\partial P\) meets \(\Sigma_\lambda\) in exactly one point and that the resulting map \(\Pi_\lambda:\partial P\to\Sigma_\lambda\) is uniformly bi-Lipschitz.

Choose \(A>2/\delta\) and set \(z=(1-A/\lambda)y+(A/\lambda)p_0\). For \(y\in\partial P\), one has \(u_a(z)\leq-A\delta/\lambda<-2/\lambda\) for every \(a\), so \(z\in P_\lambda\) and \(\Pi_\lambda(y)\in[y,z]\). Therefore
\[
d_g\bigl(\Pi_\lambda(y),y\bigr)\leq C\lambda^{-1}.
\]
The Hausdorff estimate follows from the surjectivity of \(\Pi_\lambda\).

Since \(P\) is compact and there are only finitely many \(J\subset\{1,\ldots, m\}\), there is \(\varepsilon>0\) such that \(F_J=\emptyset\) implies \(\max_{a\in J}|u_a|\geq\varepsilon\) on \(P\). Fix \(x\in \Sigma_\lambda\) and set \(I=I_\lambda(x)\). Since \(\mathcal F_\lambda(x)=1\), the set \(I\) is nonempty and \(\max_{a\in I}|u_a(x)|<2/\lambda\). Thus \(F_I\neq\emptyset\) for all sufficiently large \(\lambda\), and \eqref{eq:face-distance} gives
\[
d_g(x,F_I)\leq C\lambda^{-1}.
\]
If \(|I|\geq2\), then \(F_I\subset E_P\); if \(|I|\geq3\), then \(F_I\subset K_P\).

Finally, if \(V\Subset F_a^\circ\), there are a neighborhood \(U\) of \(\overline V\) and \(\varepsilon_V>0\) such that \(u_b\leq-\varepsilon_V\) on \(U\) for every \(b\neq a\). Choose \(\lambda_V\) so that \(\lambda_V\varepsilon_V\geq2\). Then, for every \(\lambda\geq\lambda_V\), one has \(\mathcal F_\lambda=\Phi(\lambda u_a)\) on \(U\), and hence
\[
\Sigma_\lambda\cap U=F_a\cap U.
\]
\end{proof}

\subsection{Normals near codimension-two faces}
\label{subsec:codimension-two-normals}

We examine the outward unit normal and the second fundamental form of \(\Sigma_\lambda\) near a codimension-two face. The calculations below adapt arguments of Brendle and Wang~\cite{BrendleWang}.

On \(\Sigma_\lambda\), set \(\beta_j=\Phi^\prime(\lambda u_j)|du_j|_g\). Since \(\nabla^g\mathcal F_\lambda=\lambda\sum_j\beta_j\nu_j\), the outward unit normal to \(\Sigma_\lambda\) is
\[
\nu_\lambda=\frac{\sum_j\beta_j\nu_j}{\left|\sum_j\beta_j\nu_j\right|_g}.
\]
By the defining properties of \(\Phi\), one has \(\beta_j(x)>0\) precisely when \(j\in I_\lambda(x)\).

Fix \(C_0>0\), to be chosen sufficiently large below. Throughout this subsection, we consider points \(x\in\Sigma_\lambda\) satisfying
\[
I_\lambda(x)=\{a,b\},\qquad d_g(x,K_P)>C_0\lambda^{-1}.
\]
Lemma~\ref{lem:smoothing} gives
\[
F_a\cap F_b\neq\emptyset,\qquad d_g(x,F_a\cap F_b)\leq C\lambda^{-1}.
\]
Since every face of codimension at least three is contained in \(K_P\), the assumption \(d_g(x,K_P)>C_0\lambda^{-1}\) implies for \(C_0\) sufficiently large that \(F_a\cap F_b\) has codimension two.

Abbreviate \(\Sigma=\Sigma_\lambda\) and \(\nu=\nu_\lambda\), and set \(L=\ker du_a\cap\ker du_b\). Since \(F_a\cap F_b\) has codimension two, \(du_a\) and \(du_b\) are linearly independent, and hence \(\dim L=n-2\). Since only the \(a\)- and \(b\)-terms contribute to
\(d\mathcal F_\lambda\) at \(x\), one has \(L\subset\ker d\mathcal F_\lambda=T_x\Sigma\). Thus the \(g\)-orthogonal complement of \(L\) in \(T_x\Sigma\) is a line. Let \(e\) be a \(g\)-unit vector spanning this line. The relation \(d\mathcal F_\lambda(e)=0\) gives
\[
\Phi^\prime(\lambda u_a)du_a(e)+\Phi^\prime(\lambda u_b)du_b(e)=0.
\]
Since \(e\notin L\), this relation shows that \(du_a(e)\) and \(du_b(e)\) are nonzero and have opposite signs. After replacing \(e\) by \(-e\) if necessary, we may assume that \(du_b(e)>0\). Set
\[
\kappa=\frac{\lambda\sum_{j\in\{a,b\}}\Phi^{\prime\prime}(\lambda u_j)du_j(e)^2}{|\beta_a\nu_a+\beta_b\nu_b|_g}.
\]
At \(x\), \eqref{eq:gradient-lower-bound} gives
\[
|\beta_a\nu_a+\beta_b\nu_b|_g=\lambda^{-1}|d\mathcal F_\lambda|_g\geq c.
\]
Since \(\Phi''(\lambda u_j)\) and \(du_j(e)\) are uniformly bounded, it follows that
\begin{equation}\label{eq:kappa-bound}
0\leq\kappa\leq C\lambda.
\end{equation}
By the level-set formula, the second fundamental form \(\mathrm{II}_{\Sigma}^{g}\) of \(\Sigma\) with respect to \(\nu\)
is given by \(\mathrm{II}_{\Sigma}^{g}=|d\mathcal F_\lambda|_g^{-1}\nabla_g^2\mathcal F_\lambda\) on \(T_x\Sigma\). At \(x\), \(|d\mathcal F_\lambda|_g=\lambda|\beta_a\nu_a+\beta_b\nu_b|_g\). Every \(X\in T_x\Sigma\) can be written as
\(X=X_L+\langle X,e\rangle_g e\), where \(X_L\in L\). Since \(du_a\) and \(du_b\) vanish on \(L\), we have \(du_j(X)=du_j(e)\langle X,e\rangle_g\) for \(j\in\{a,b\}\). Expanding \(\nabla_g^2\mathcal F_\lambda\) gives
\begin{equation}\label{eq:second-form-decomposition}
\mathrm{II}_{\Sigma}^{g}(X,Y)=\frac{\sum_{j\in\{a,b\}}\Phi^\prime(\lambda u_j)\nabla_g^2u_j(X,Y)
}{|\beta_a\nu_a+\beta_b\nu_b|_g}+\kappa\langle X,e\rangle_g\langle Y,e\rangle_g
\end{equation}
for \(X,Y\in T_x\Sigma\). The first term on the right-hand side is uniformly bounded. Consequently,
\begin{equation}\label{eq:second-form-estimates}
\mathrm{II}_\Sigma^g(e,e)=\kappa+O(1),\qquad H_\Sigma^g=\kappa+O(1),\qquad\bigl|\mathrm{II}_\Sigma^g(Y,\cdot)\bigr|_g\leq C
\end{equation}
for every \(g\)-unit vector \(Y\in L\).

We now describe how the normal turns in the direction transverse to \(F_a\cap F_b\), taking the two face normals at a nearby point of \(F_a\cap F_b\) as reference. Let \(x_{ab}\) be the Euclidean projection of \(x\) onto \(\{u_a=u_b=0\}\). Since \(F_a\cap F_b\) is contained in this affine space,
\[
|x-x_{ab}|\leq d(x,F_a\cap F_b)\leq C\lambda^{-1}.
\]
If \(x_{ab}\notin F_a\cap F_b\), the nearest point to \(x\) in \(F_a\cap F_b\) lies in its relative boundary and hence in \(K_P\).
This would give \(d_g(x,K_P)\leq C\lambda^{-1}\), a contradiction for \(C_0\) sufficiently large. Thus \(x_{ab}\in F_a\cap F_b\).

Let \(\bar\nu_a\) and \(\bar\nu_b\) be obtained by \(g\)-parallel transporting \(\nu_a(x_{ab})\) and \(\nu_b(x_{ab})\) to \(x\) along the straight segment from \(x_{ab}\) to \(x\). Put \(\alpha=\arccos\langle\bar{\nu}_a,\bar{\nu}_b\rangle_g=\alpha_{ab}^g(x_{ab})\). Thus \(\alpha_{ab}\leq\alpha\) by assumption. Moreover, \(\alpha\) stays uniformly away from \(0\) and \(\pi\) by compactness. Set
\[
\bar{Z}=\beta_a\bar{\nu}_a+\beta_b\bar{\nu}_b,\qquad \bar{\nu}=\frac{\bar{Z}}{|\bar{Z}|_g}.
\]
The vector \(\bar{\nu}\) lies on the minimizing geodesic in the \(g_x\)-unit sphere from \(\bar{\nu}_a\) to \(\bar{\nu}_b\). Let \(s\in[0,\alpha]\) be the spherical distance from \(\bar{\nu}_a\) to \(\bar{\nu}\).
The normal \(\nu\) lies on the minimizing geodesic in the \(g_x\)-unit sphere from \(\nu_a\) to \(\nu_b\). Its unit tangent at \(\nu\), oriented toward \(\nu_b\), is \(e\), since it lies in \(T_x\Sigma\cap L^{\perp_g}\) and has positive \(du_b\)-value. The following lemma compares the two spherical arcs.

\begin{lemma}\label{lem:s-estimates}
For all sufficiently large \(\lambda\) and every \(x\in\Sigma_\lambda\) with \(I_\lambda(x)=\{a,b\}\) and \(d_g(x,K_P)\geq C_0\lambda^{-1}\), one has
\[
ds(e)=\kappa+O(1),\qquad |d\alpha|_g\leq C.
\]
Moreover, \(|ds(Y)|\leq C\) for every \(g\)-unit vector \(Y\in L\).
\end{lemma}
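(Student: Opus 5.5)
The plan is to write \(s\) as an explicit function of the data \(\beta_a,\beta_b,\bar\nu_a,\bar\nu_b\) and differentiate, isolating the only term of size \(\lambda\). That term comes from \(\lambda\Phi''(\lambda u_j)\,du_j\) in \(d\beta_j\), and every other contribution will be shown to be \(O(1)\).

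\emph{Estimate for \(\alpha\).} The map \(x\mapsto x_{ab}\) is the restriction of a fixed affine (Euclidean orthogonal) projection onto \(\{u_a=u_b=0\}\). Also \(\alpha(x)=\alpha^g_{ab}(x_{ab})=\arccos\langle\nu_a,\nu_b\rangle_g(x_{ab})\). Since \(\alpha\) stays uniformly away from \(0\) and \(\pi\), the function \(\arccos\) is smooth on the relevant range. Hence \(\alpha\) is the composition of a smooth function on a neighborhood of \(P\) with an affine map of bounded norm, and \(|d\alpha|_g\leq C\) follows at once.

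\emph{Smoothness of the transported normals.} The segment from \(x_{ab}\) to \(x\) has length \(O(\lambda^{-1})\) and depends affinely on \(x\). By smooth dependence of parallel transport on the curve, \(x\mapsto\bar\nu_j(x)\) is a smooth vector field whose \(g\)-covariant derivative is bounded uniformly in \(\lambda\). The transport also changes \(\nu_j(x_{ab})\) by only \(O(|x-x_{ab}|)\), which gives
\[
|\bar\nu_j-\nu_j(x)|_g\leq C\lambda^{-1}.
\]

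\emph{Derivative of \(s\).} Inside the \(g_x\)-plane spanned by \(\bar\nu_a,\bar\nu_b\), the point at angle \(s\) from \(\bar\nu_a\) is proportional to \(\sin(\alpha-s)\bar\nu_a+\sin s\,\bar\nu_b\). So \(s\) is determined by the relation
\[
\frac{\beta_b}{\beta_a}=\frac{\sin s}{\sin(\alpha-s)}.
\]
Let \(\bar e\) be the unit tangent of the arc at \(\bar\nu\), oriented toward \(\bar\nu_b\). Differentiating \(\bar\nu=\bar Z/|\bar Z|_g\) gives, for any unit vector \(X\),
\[
ds(X)=\frac{\langle \nabla^g_X\bar Z,\bar e\rangle_g}{|\bar Z|_g}+O(1),
\]
where the \(O(1)\) absorbs the bounded variation of the plane spanned by \(\bar\nu_a,\bar\nu_b\), including the \(d\alpha\) contribution. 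Next we expand
\[
d\beta_j=\lambda\Phi''(\lambda u_j)|du_j|_g\,du_j+\Phi'(\lambda u_j)\,d|du_j|_g .
\]
The second term is bounded, and \(\beta_j\nabla\bar\nu_j\) is bounded. Therefore
\[
ds(X)=\frac{\lambda\sum_{j}\Phi''(\lambda u_j)|du_j|_g\,du_j(X)\,\langle\bar\nu_j,\bar e\rangle_g}{|\bar Z|_g}+O(1).
\]
The case \(Y\in L\) is immediate: \(du_j(Y)=0\), so the large term vanishes and \(|ds(Y)|\leq C\).

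\emph{The case \(X=e\) (main obstacle).} Here we must replace the barred quantities by unbarred ones at a cost of \(O(\lambda^{-1})\), so that after multiplication by \(\lambda\) the error stays \(O(1)\). The steps are as follows.
\begin{itemize}
\item Since \(|\bar\nu_j-\nu_j|_g\leq C\lambda^{-1}\) and \(\beta_j\) is bounded, \(|\bar Z-Z|_g\leq C\lambda^{-1}\), where \(Z=\beta_a\nu_a+\beta_b\nu_b\). Because \(|Z|_g\geq c\) by \eqref{eq:gradient-lower-bound}, this also gives \(|\bar\nu-\nu|_g\leq C\lambda^{-1}\).
\item Since the angles of both arcs are bounded away from \(0\) and \(\pi\), the unit tangents of the two arcs are Lipschitz functions of their endpoints and base points. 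Hence \(|\bar e-e|_g\leq C\lambda^{-1}\); recall that \(e\) is the unit tangent of the \(\nu\)-arc toward \(\nu_b\).
\item From \(\nu_j=\nabla^g u_j/|du_j|_g\) we get \(\langle\nu_j,e\rangle_g=du_j(e)/|du_j|_g\). Combining with the two previous bullets, \(\langle\bar\nu_j,\bar e\rangle_g=du_j(e)/|du_j|_g+O(\lambda^{-1})\).
\end{itemize}
Substituting into the formula for \(ds(X)\) with \(X=e\), and using \(1/|\bar Z|_g=1/|Z|_g+O(\lambda^{-1})\) together with \(0\leq\kappa\leq C\lambda\) from \eqref{eq:kappa-bound}, we obtain
\[
ds(e)=\frac{\lambda\sum_j\Phi''(\lambda u_j)du_j(e)^2}{|Z|_g}+O(1)=\kappa+O(1).
\]
Each replacement error is \(\lambda\cdot O(\lambda^{-1})=O(1)\). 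The delicate point throughout is that the \(O(\lambda^{-1})\) closeness of all barred quantities is needed exactly once against the single factor \(\lambda\). This closeness rests on \(x_{ab}\in F_a\cap F_b\) and \(|x-x_{ab}|\leq C\lambda^{-1}\), which hold by the choice of \(C_0\).
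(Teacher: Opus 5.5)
Your proposal is correct and follows the paper's strategy. You parallel-transport the normals from \(x_{ab}\), write \(\bar\nu=\cos s\,\bar\nu_a+\sin s\,T_a\) to get \(ds(X)=\langle\nabla^g_X\bar Z,\bar e\rangle_g/|\bar Z|_g+O(|X|_g)\), and use the \(O(\lambda^{-1})\) closeness of barred and unbarred quantities once against the single factor \(\lambda\). The difference is in where the leading term is identified: you match it directly with the defining formula for \(\kappa\), via \(d\beta_j\), \(\langle\nu_j,e\rangle_g=du_j(e)/|du_j|_g\), and \(du_j(Y)=0\) for \(Y\in L\), so you need only \(C^0\) closeness. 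The paper instead proves the \(C^1\) comparison \(|\nabla^g\nu-\nabla^g\bar\nu|_g\leq C\) and reads off \(ds(e)\) and \(ds(Y)\) from \(\mathrm{II}^g_\Sigma(e,e)=\kappa+O(1)\) and \(|\mathrm{II}^g_\Sigma(Y,\cdot)|_g\leq C\).
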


\begin{proof}
Since \(x_{ab}\) depends smoothly on \(x\), differentiating the equation for parallel transport along the straight segment joining \(x_{ab}\) to \(x\) gives
\[
|\bar\nu_j-\nu_j|_g\leq C\lambda^{-1}\quad (j\in\{a,b\}),\qquad|\nabla^g\bar\nu_a|_g+|\nabla^g\bar\nu_b|_g\leq C.
\]
Since \(\alpha\) stays uniformly away from \(0\) and \(\pi\), differentiating \(\cos\alpha=\langle\bar\nu_a,\bar\nu_b\rangle_g\) gives \(|d\alpha|_g\leq C\).

We next compare \(\nu\) and \(\bar\nu\). Set \(Z=\beta_a\nu_a+\beta_b\nu_b\). At \(x\), \eqref{eq:gradient-lower-bound} gives \(|Z|_g\geq c\), while the
definition of \(\beta_j\) gives \(\beta_a+\beta_b\leq C\). Since \(Z-\bar Z=\sum_{j\in\{a,b\}}\beta_j(\nu_j-\bar\nu_j)\), it follows that \(|Z-\bar Z|_g\leq C\lambda^{-1}\) and \(|\bar Z|_g\geq c\) for all sufficiently large \(\lambda\).

Moreover, \(d\beta_j=\lambda\Phi''(\lambda u_j)|du_j|_g\,du_j+\Phi'(\lambda u_j)d|du_j|_g\), and hence \(|d\beta_j|_g\leq C\lambda\). Differentiating the expression for \(Z-\bar Z\) gives
\[
\nabla^g(Z-\bar Z)=\sum_{j\in\{a,b\}}d\beta_j\otimes(\nu_j-\bar\nu_j)+\sum_{j\in\{a,b\}}\beta_j\nabla^g(\nu_j-\bar\nu_j).
\]
Consequently,
\[
\bigl|\nabla^g(Z-\bar Z)\bigr|_g\leq C,\qquad|\nabla^gZ|_g+|\nabla^g\bar Z|_g\leq C\lambda.
\]
Differentiating \(\nu=Z/|Z|_g\) and \(\bar\nu=\bar Z/|\bar Z|_g\), we obtain
\begin{equation}\label{eq:normal-comparison}
|\nu-\bar\nu|_g\leq C\lambda^{-1},\qquad\bigl|\nabla^g\nu-\nabla^g\bar\nu\bigr|_g\leq C.
\end{equation}

Set \(T_a=(\bar\nu_b-\cos\alpha\,\bar\nu_a)/\sin\alpha\). Then
\[
\bar\nu=\cos s\,\bar\nu_a+\sin s\,T_a,\qquad T=-\sin s\,\bar\nu_a+\cos s\,T_a,
\]
where \(T\) is the unit tangent at \(\bar\nu\) oriented toward \(\bar\nu_b\). The bounds for \(\nabla^g\bar\nu_a\), \(\nabla^g\bar\nu_b\), and \(d\alpha\) give \(|\nabla^gT_a|_g\leq C\). Differentiating the first identity and taking the inner product with \(T\), we obtain
\begin{equation}\label{eq:ds-normal}
ds(X)=\langle\nabla_X^g\bar\nu,T\rangle_g+O(|X|_g) \qquad (X\in T_x\Sigma).
\end{equation}

Since the endpoint angles stay in a compact subset of \((0,\pi)\), the oriented unit tangent to the minimizing spherical geodesic depends smoothly on its endpoints and the point on the geodesic. The bounds for \(\bar\nu_a-\nu_a\), \(\bar\nu_b-\nu_b\), and \eqref{eq:normal-comparison} therefore give \(|T-e|_g\leq C\lambda^{-1}\). On the other hand, \eqref{eq:kappa-bound} and \eqref{eq:second-form-decomposition} give
\[
|\nabla_e^g\nu|_g=\bigl|\mathrm{II}_\Sigma^g(e,\cdot)\bigr|_g\leq\kappa+C\leq C\lambda.
\]
Together with \eqref{eq:normal-comparison}, these estimates imply
\[
\langle\nabla_e^g\bar\nu,T\rangle_g=\langle\nabla_e^g\nu,e\rangle_g+O(1).
\]
It now follows from \eqref{eq:ds-normal} and \eqref{eq:second-form-estimates} that
\[
ds(e)=\langle\nabla_e^g\nu,e\rangle_g+O(1)=\mathrm{II}_\Sigma^g(e,e)+O(1)=\kappa+O(1).
\]

Finally, if \(Y\in L\) is a \(g\)-unit vector, then \eqref{eq:normal-comparison}, \eqref{eq:ds-normal}, and \eqref{eq:second-form-estimates} give
\[
|ds(Y)|\leq|\nabla_Y^g\nu|_g+C=\bigl|\mathrm{II}_\Sigma^g(Y,\cdot)\bigr|_g+C\leq C.
\]
\end{proof}

\subsection{The map \texorpdfstring{\(\eta_\lambda\)}{eta\_lambda}}

We first define a sphere-valued map away from \(K_P\), where at most two indices belong to \(I_\lambda(x)\). For \(U,V\in \mathbb S^{n-1}\) with \(U\neq -V\), let \(\Gamma(U,V,\cdot)\) denote the unique minimizing geodesic from \(U\) to \(V\), parameterized on \([0,1]\) at constant speed. Writing \(\theta=\arccos\langle U,V\rangle\), one has
\begin{equation}\label{eq:spherical-interpolation}
\Gamma(U,V,t)=\frac{\sin((1-t)\theta)}{\sin\theta}U+\frac{\sin(t\theta)}{\sin\theta}V,
\end{equation}
where the right-hand side is understood by smooth extension at \(\theta=0\).

By Lemma~\ref{lem:smoothing}, after increasing \(C_0\) if necessary,
\[
x\in\Sigma_\lambda,\qquad d_g(x,K_P)>C_0\lambda^{-1}\quad\Longrightarrow\qquad |I_\lambda(x)|\leq 2.
\]
On this open subset of \(\Sigma_\lambda\), define
\begin{equation}\label{eq:zeta-definition}
\zeta_\lambda(x)=\begin{cases}
    N_a, &I_\lambda(x)=\{a\}, \\[2mm]
\Gamma\left(N_a, N_b, \frac{s}{\alpha}\right), &I_\lambda(x)=\{a,b\},  
\end{cases}
\end{equation}
where \(s\) and \(\alpha=\alpha_{ab}^g(x_{ab})\) are as in Subsection~\ref{subsec:codimension-two-normals}.  The second line is independent of the order of \(a\) and \(b\). Interchanging them replaces \(s/\alpha\) by \(1-s/\alpha\), and
\[
\Gamma(N_b,N_a,1-s/\alpha)
=
\Gamma(N_a,N_b,s/\alpha).
\]
\begin{lemma}\label{lem:zeta-smooth}
After increasing \(C_0\) if necessary, the map \(\zeta_\lambda\) is well-defined and smooth on 
\[
\{x\in\Sigma_\lambda: d_g(x,K_P)>C_0\lambda^{-1}\}
\]
for all sufficiently large \(\lambda\).
\end{lemma}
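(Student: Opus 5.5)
The plan has two parts: first show that each branch of \eqref{eq:zeta-definition} is smooth on the open set where it applies, then show that the branches agree smoothly where the index set \(I_\lambda(x)\) changes. Work on the open set \(\Omega_\lambda=\{x\in\Sigma_\lambda: d_g(x,K_P)>C_0\lambda^{-1}\}\). Here \(|I_\lambda(x)|\in\{1,2\}\), since \(I_\lambda(x)\neq\emptyset\) on \(\Sigma_\lambda\).

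\textbf{Smoothness of the two-index branch.} Fix a pair \(\{a,b\}\) with \(F_a\cap F_b\) of codimension two. On the open set \(\{u_a>-2/\lambda,\ u_b>-2/\lambda\}\cap\Omega_\lambda\), the index set is exactly \(\{a,b\}\). All the ingredients of the second line are smooth functions of \(x\) there:
\begin{itemize}
\item The projection \(x_{ab}\) onto the affine space \(\{u_a=u_b=0\}\) is affine in \(x\).
\item Parallel transport along the straight segment from \(x_{ab}\) to \(x\) depends smoothly on its endpoints, so \(\bar\nu_a\) and \(\bar\nu_b\) are smooth.
\item The angle \(\alpha=\alpha^g_{ab}(x_{ab})\) is smooth, since it stays away from \(0\) and \(\pi\).
\item The weights \(\beta_a,\beta_b\) are smooth, \(|\bar Z|_g\geq c\), and \(\beta_a,\beta_b>0\).
\end{itemize}
Hence \(\bar\nu\) is a smooth point of the open arc from \(\bar\nu_a\) to \(\bar\nu_b\). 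The spherical distance \(s\) is then smooth, for instance \(s=\arccos\langle\bar\nu,\bar\nu_a\rangle_g\) with \(s\in(0,\alpha)\). Since \(\langle N_a,N_b\rangle\neq-1\), formula \eqref{eq:spherical-interpolation} is smooth in \(t\), so \(\Gamma(N_a,N_b,s/\alpha)\) is smooth. The one-index branch is constant, hence trivially smooth.

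\textbf{Matching across the transition.} This is the main step. Consider a point \(x_0\in\Omega_\lambda\) where \(I_\lambda\) jumps, say \(u_b(x_0)=-2/\lambda\) with \(a\in I_\lambda(x_0)\).
\begin{itemize}
\item At \(x_0\) itself, \(I_\lambda(x_0)=\{a\}\) and \(\zeta_\lambda=N_a\).
\item Near \(x_0\), the formula for \(s\) still makes sense if we allow \(\beta_b\geq0\). By the extended formula, \(s\) is a smooth function of \(x\) on a full neighborhood of \(x_0\), and \(s=0\) exactly where \(\beta_b=0\). This needs \(x_{ab}\in F_a\cap F_b\) and \(F_a\cap F_b\neq\emptyset\), which the argument preceding Lemma~\ref{lem:s-estimates} gives once \(C_0\) is large.
\item Since \(\Gamma(N_a,N_b,0)=N_a\), the expression \(\Gamma(N_a,N_b,s/\alpha)\) is a single smooth function on that neighborhood. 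It coincides with \(\zeta_\lambda\) on both sides: where \(\beta_b=0\) it equals \(N_a\), and elsewhere it is the two-index branch.
\end{itemize}
Smoothness of the extended \(s\) near \(s=0\) is not automatic, because \(\arccos\) is singular at \(1\). Instead take \(s\) as the argument of \(\bar\nu\) in the smooth orthonormal frame \((\bar\nu_a,T_a)\):
\[
s=\operatorname{atan2}\bigl(\langle\bar\nu,T_a\rangle_g,\langle\bar\nu,\bar\nu_a\rangle_g\bigr),
\]
where \(T_a\) is as in the proof of Lemma~\ref{lem:s-estimates}. This is smooth near \(\bar\nu=\bar\nu_a\). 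Explicitly, \(\bar\nu\) is a positive combination of \(\bar\nu_a\) and \(\sin\alpha\,T_a+\cos\alpha\,\bar\nu_a\) with coefficients smooth in \(\beta_a,\beta_b\).

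\textbf{Remaining cases.} Two further points need checking.
\begin{itemize}
\item Two simultaneous transitions are not possible in \(\Omega_\lambda\). Near a point where \(I_\lambda=\{a\}\) with \(u_b=u_c=-2/\lambda\), a third index could enter; a nearby point would then have \(|I_\lambda|\geq3\), and Lemma~\ref{lem:smoothing} would put it within \(C\lambda^{-1}\) of \(K_P\). Increasing \(C_0\) rules this out, since \(\Omega_\lambda\) is open and the bound is uniform.
\item The branch value does not depend on the order of \(a\) and \(b\), by the symmetry noted before the lemma. The symmetric transition \(\beta_a\to0\) corresponds to \(s\to\alpha\), where \(\Gamma(N_a,N_b,1)=N_b\), and is handled in the same way.
\end{itemize}
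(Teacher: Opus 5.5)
Your proof is essentially the paper's: you analyze each $x_0$ locally and extend $s$ across $\{\beta_b=0\}$ by a formula that is smooth near $\bar\nu=\bar\nu_a$. Your $\operatorname{atan2}$ expression is exactly the paper's $s=\arctan\bigl(\beta_b\sin\alpha/(\beta_a+\beta_b\cos\alpha)\bigr)$, because $\langle\bar\nu,T_a\rangle_g=\beta_b\sin\alpha/|\bar Z|_g$ and $\langle\bar\nu,\bar\nu_a\rangle_g=(\beta_a+\beta_b\cos\alpha)/|\bar Z|_g$.

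\textbf{The unjustified step.} You exclude a point $x_0\in\Omega_\lambda$ with $I_\lambda(x_0)=\{a\}$ and $u_b(x_0)=u_c(x_0)=-2/\lambda$ by asserting that a nearby point of $\Sigma_\lambda$ would have $|I_\lambda|\geq 3$. This is not automatic. $\Sigma_\lambda$ is tangent to $\ker du_a$ at $x_0$, so you need some $v\in\ker du_a$ with $du_b(v)>0$ and $du_c(v)>0$. Such a $v$ does exist, but proving it requires two things:
\begin{itemize}
\item $F_a\cap F_b\cap F_c\neq\emptyset$;
\item a Farkas-type argument using irredundancy. Otherwise $N_a$ would lie in the cone spanned by $N_b,N_c$, and $u_a\leq 0$ would follow from $u_b,u_c\leq 0$.
\end{itemize}

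\textbf{How the paper closes it.} The paper avoids nearby points and works at $x_0$ itself. Set $J_0=\{j:u_j(x_0)\geq-2/\lambda\}$. Then $\max_{j\in J_0}|u_j(x_0)|\leq 2\lambda^{-1}$, so $F_{J_0}\neq\emptyset$ for large $\lambda$. If $|J_0|\geq 3$, then $F_{J_0}\subset K_P$, and \eqref{eq:face-distance} gives $d_g(x_0,K_P)\leq 2C\lambda^{-1}$, contradicting $C_0>2C$. Hence $I_\lambda\subset J_0$ on a neighborhood of $x_0$, which is exactly what your matching step needs.

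The same estimate at $x_0$, with $J_0=\{a,b\}$, also shows that $F_a\cap F_b$ is nonempty of codimension two and that $x_{ab}\in F_a\cap F_b$ in the transition case. You cite the discussion before Lemma~\ref{lem:s-estimates} for this. That discussion is stated only for $I_\lambda(x)=\{a,b\}$, so it does not literally apply at $x_0$, where $I_\lambda(x_0)=\{a\}$.
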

\begin{proof}
    Fix \(x_0\in\Sigma_\lambda\) with \(d_g(x_0,K_P)>C_0\lambda^{-1}\), and set
    \[
    J_0=\{j:u_j(x_0)\geq -2/\lambda\}\supset I_\lambda(x_0).
    \]
    As in the proof of Lemma~\ref{lem:smoothing}, there is \(\varepsilon>0\) such that
    \[
    F_J=\emptyset\quad\Longrightarrow\quad\max_{j\in J}|u_j(x)|\geq\varepsilon\qquad (x\in P).
    \]
    Since \(\max_{j\in J_0}|u_j(x_0)|\leq 2\lambda^{-1}\), we have \(F_{J_0}\neq\emptyset\) for all sufficiently large \(\lambda\).

    If \(|J_0|\geq 3\), then \(F_{J_0}\subset K_P\), and \eqref{eq:face-distance} gives
    \[
    d_g(x_0,K_P)\leq d_g(x_0,F_{J_0})\leq C\max_{j\in J_0}|u_j(x_0)|\leq 2C\lambda^{-1},
    \]
contrary to the choice of \(C_0\) once \(C_0>2C\). Thus \(|J_0|\leq 2\). Moreover, if \(J_0=\{a,b\}\), the same argument shows that \(F_a\cap F_b\) has codimension two.

For every \(j\notin J_0\), one has \(u_j(x_0)< -2/\lambda\). Hence, after restricting to a neighborhood \(U\) of \(x_0\),
\[
I_\lambda(x)\subset J_0 \qquad (x\in U).
\]
If \(|J_0|=1\), then \(\zeta_\lambda\) is constant on \(U\). If \(J_0=\{a,b\}\), then \(I_\lambda\subset\{a,b\}\) on \(U\). Taking the \(g\)-inner product of \(\bar{Z}=\beta_a\bar{\nu}_a+\beta_b\bar{\nu}_b\) with \(\bar{\nu}_a\) gives \(\cos s=(\beta_a+\beta_b\cos\alpha)/|\bar{Z}|_g\). The component of \(\bar{Z}\) orthogonal to \(\bar{\nu}_a\) is \(\beta_b(\bar{\nu}_b-\cos\alpha\bar{\nu}_a)\), whose norm is \(\beta_b\sin\alpha\). Hence \(\sin s=\beta_b\sin\alpha/|\bar{Z}|_g\). When \(\beta_a,\beta_b>0\), the vector \(\bar{Z}\) lies strictly between \(\bar{\nu}_a\) and \(\bar{\nu}_b\), so \(0<s<\alpha<\pi\). It follows that 
\[
s=\arccos\left(\frac{\beta_a+\beta_b\cos\alpha}{|\bar{Z}|_g}\right)
\]
is smooth. Since \(t\mapsto\Gamma(N_a,N_b,t)\) is smooth, \(\zeta_\lambda\) is smooth where \(\beta_a,\beta_b>0\).

It remains to consider points where one of \(\beta_a,\beta_b\) vanishes. Suppose that \(\beta_b=0\), so \(u_b(x_0)=-2/\lambda\) . Since \(\mathcal F_\lambda=1\) and \(I_\lambda(x)\subset\{a,b\}\), one has \(\Phi(\lambda u_a)=1\), and therefore \(u_a=0\) and \(\beta_a=\Phi'(0)|du_a|_g>0\). After shrinking \(U\), we may assume that \(\beta_a+\beta_b\cos\alpha>0\) on \(U\). The preceding discussions give
\[
s=\arctan\left(\frac{\beta_b\sin\alpha}{\beta_a+\beta_b\cos\alpha}\right).
\]
The right-hand side is smooth on \(U\) and vanishes when \(\beta_b=0\). Hence \(\Gamma(N_a,N_b,s/\alpha)\) agrees smoothly with \(N_a\) across \(\beta_b=0\). The case \(\beta_a(x_0)=0\) follows by interchanging \(a\) and \(b\). Thus \(\zeta_\lambda\) is smooth near \(x_0\), completing the proof.
\end{proof}

We next estimate the differential of \(\zeta_\lambda\) in terms of the mean curvature of \(\Sigma_\lambda\). For a linear map \(B\) between finite-dimensional inner-product spaces, let \(\|B\|_{\operatorname{tr}}\) denote the sum of its singular values, equivalently,
\[
\|B\|_{\operatorname{tr}}=\operatorname{tr}\left((B^*B)^{1/2}\right).
\]

\begin{lemma}\label{lem:zeta-estimate}
Let \(x\in\Sigma_\lambda\) satisfy \(I_\lambda(x)=\{a,b\}\) and \(d_g(x,K_P)>C_0\lambda^{-1}\). With \(L\), \(e\), and \(\kappa\)  as in Subsection~\ref{subsec:codimension-two-normals}, one has
\[
|d\zeta_\lambda(e)|\leq\kappa+C,\qquad |d\zeta_\lambda(Y)|\leq C
\]
for every \(g\)-unit vector \(Y\in L\). Consequently,
\begin{equation}\label{eq:zeta-estimate}
    \|d\zeta_\lambda\|_{\operatorname{tr}}\leq H_{\Sigma_\lambda}^g+C
\end{equation}
throughout the region \(d_g(x,K_P)>C_0\lambda^{-1}\).
\end{lemma}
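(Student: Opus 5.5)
Write \(\zeta_\lambda=\Gamma(N_a,N_b,s/\alpha)\) near \(x\). Here \(N_a,N_b\) are fixed Euclidean unit vectors with \(\arccos\langle N_a,N_b\rangle=\alpha_{ab}\). The path \(t\mapsto\Gamma(N_a,N_b,t)\) is a constant-speed geodesic of the Euclidean unit sphere of length \(\alpha_{ab}\), so
\[
\Bigl|\partial_t\Gamma(N_a,N_b,t)\Bigr|=\alpha_{ab}.
\]
By the chain rule,
\[
d\zeta_\lambda(X)=\alpha_{ab}\,d\!\left(\frac{s}{\alpha}\right)(X)\,\partial_t\Gamma/\alpha_{ab},
\]
and hence
\[
|d\zeta_\lambda(X)|=\frac{\alpha_{ab}}{\alpha}\left|ds(X)-\frac{s}{\alpha}\,d\alpha(X)\right|.
\]
The points with \(I_\lambda(x)=\{a,b\}\) but \(\beta_a\beta_b=0\) are included, since Lemma~\ref{lem:zeta-smooth} shows that the same formula holds smoothly there.

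Apply this with \(X=e\). By Lemma~\ref{lem:s-estimates}, \(ds(e)=\kappa+O(1)\) and \(|d\alpha|_g\leq C\), and \(0\leq s/\alpha\leq1\). The dihedral angle hypothesis gives \(\alpha_{ab}\leq\alpha\), which is exactly where it enters. Since \(\kappa\geq0\), we get
\[
|d\zeta_\lambda(e)|\leq\frac{\alpha_{ab}}{\alpha}(\kappa+C)\leq\kappa+C.
\]
Since \(\alpha\) is bounded away from \(0\), the constant \(C\) does not blow up. For a \(g\)-unit \(Y\in L\), Lemma~\ref{lem:s-estimates} gives \(|ds(Y)|\leq C\), so \(|d\zeta_\lambda(Y)|\leq C\).

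For the trace norm, choose a \(g\)-orthonormal basis \(e,Y_1,\dots,Y_{n-2}\) of \(T_x\Sigma_\lambda\) with \(Y_i\in L\). The trace norm of any linear map \(B\) is bounded by \(\sum_i|B\epsilon_i|\) for any orthonormal basis \(\epsilon_i\). This follows because \(\|B\|_{\operatorname{tr}}=\sup_{\|U\|\leq1}\operatorname{tr}(U^*B)\), and \(\operatorname{tr}(U^*B)=\sum_i\langle U\epsilon_i,B\epsilon_i\rangle\leq\sum_i|B\epsilon_i|\). Therefore
\[
\|d\zeta_\lambda\|_{\operatorname{tr}}\leq\kappa+C+(n-2)C.
\]
By \eqref{eq:second-form-estimates}, \(\kappa=H^g_{\Sigma_\lambda}+O(1)\), which gives \eqref{eq:zeta-estimate} at such points.

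It remains to treat the rest of the region \(d_g(x,K_P)>C_0\lambda^{-1}\), where \(|I_\lambda|\leq2\). If \(I_\lambda(x)=\{a\}\), then \(I_\lambda\) is a single index on a neighborhood of \(x\), since the conditions \(u_j<-2/\lambda\) for \(j\ne a\) are open. So \(\zeta_\lambda\) is locally constant there and \(d\zeta_\lambda=0\). We still need \(H^g_{\Sigma_\lambda}\geq-C\) at such \(x\). There \(\Sigma_\lambda=\{u_a=\mathrm{const}\}\) locally, a level set of the affine function \(u_a\), so its \(g\)-mean curvature is bounded by the Hessian term \(\nabla^2_gu_a/|du_a|_g\), which is uniformly bounded. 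This gives \(\|d\zeta_\lambda\|_{\operatorname{tr}}=0\leq H^g_{\Sigma_\lambda}+C\).

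The only delicate step is the \(e\)-direction. There the large quantity \(\kappa\sim\lambda\) must appear with coefficient at most \(1\). This is guaranteed by the factor \(\alpha_{ab}/\alpha\leq1\) together with the sharp leading term \(ds(e)=\kappa+O(1)\) from Lemma~\ref{lem:s-estimates}. The error \(\frac{s}{\alpha}d\alpha(e)\) is harmless only because \(|d\alpha|_g\) is bounded independently of \(\lambda\).
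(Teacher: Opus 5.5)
Your proposal is correct and follows essentially the paper's argument. It uses the chain rule through the constant-speed arc \(\Gamma(N_a,N_b,\cdot)\), the factors \(\alpha_{ab}/\alpha\le 1\) and \(0\le s\le\alpha\), Lemma~\ref{lem:s-estimates}, and the bound of \(\|d\zeta_\lambda\|_{\operatorname{tr}}\) by \(\sum_i|d\zeta_\lambda(\epsilon_i)|\) over an orthonormal basis (you justify this by duality, which the paper only asserts); at single-index points your cruder bound \(|H^g_{\Sigma_\lambda}|\le C\), in place of \(H^g_{\Sigma_\lambda}=H^g_{F_a}\ge 0\), is enough.

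One small slip: \(I_\lambda(x)=\{a\}\) only gives \(u_j(x)\le-2/\lambda\) for \(j\ne a\). So where \(u_b(x)=-2/\lambda\), the index set is not locally constant and \(\Sigma_\lambda\) is not locally a level set of \(u_a\). Your conclusions \(d\zeta_\lambda(x)=0\) and \(|H^g_{\Sigma_\lambda}(x)|\le C\) still hold there, because \(\Phi\) vanishes to infinite order at \(-2\): both \(s\) and \(ds\) vanish, and the \(b\)-terms drop out of the level-set formula.
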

\begin{proof}
    At a point where \(I_\lambda(x)=\{a,b\}\), the curve \(t\mapsto\Gamma(N_a,N_b,t)\) has constant speed \(\alpha_{ab}\). Hence, for \(X\in T_x\Sigma_\lambda\),
 \[
 \begin{aligned}
     |d\zeta_\lambda(X)|=&\alpha_{ab}\left|d\left(\frac{s}{\alpha}\right)(X)\right|\\
     \leq&\frac{\alpha_{ab}}{\alpha}|ds(X)|+\frac{\alpha_{ab}s}{\alpha^2}|d\alpha(X)|.
 \end{aligned}
 \]
 Since \(\alpha_{ab}\leq\alpha\), \(0\leq s\leq\alpha\), we obtain
 \[
 |d\zeta_\lambda(X)|\leq |ds(X)|+C|X|_g.
 \]
 The first two estimates then follow from Lemma~\ref{lem:s-estimates}.

 Choose a \(g\)-orthonormal basis \(Y_1,\ldots,Y_{n-2}\) of \(L\). Then
 \[
 \|d\zeta_\lambda\|_{\operatorname{tr}}\leq|d\zeta_\lambda(e)|+\sum_{j=1}^{n-2}|d\zeta_\lambda(Y_j)|\leq\kappa+C.
 \]
 By \eqref{eq:second-form-estimates}, \(H_{\Sigma_\lambda}^g=\kappa+O(1)\), which proves \eqref{eq:zeta-estimate} when \(I_\lambda(x)=\{a,b\}\).

 If \(I_\lambda(x)=\{a\}\), then \(\zeta_\lambda=N_a\) and \(d\zeta_\lambda=0\), while \(H_{\Sigma_\lambda}^g=H_{F_a}^g\geq 0\). Thus \eqref{eq:zeta-estimate} also holds in this case.
\end{proof}

Although \(\zeta_\lambda\) may be defined near \(K_P\), the preceding argument gives no control of its differential, so near \(K_P\) we replace it by the map
\[
q(x)=\frac{x-p_0}{|x-p_0|},\qquad x\in P\setminus\{p_0\}.
\]
The precise form of \(q\) is not essential; the key point is the estimate in the following lemma.

\begin{lemma}\label{lem:q-degree}
For all sufficiently large \(\lambda\), the restriction \(q|_{\Sigma_\lambda}\) has degree one. Moreover, there is \(c_0>0\), independent of \(\lambda\), such that
\begin{equation}\label{eq:hemisphere}
    \langle q(x), N_a\rangle\geq c_0
\end{equation}
whenever \(x\in\Sigma_\lambda\) and \(a\in I_\lambda(x)\).
\end{lemma}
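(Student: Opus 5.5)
The plan is to handle the degree statement through the radial structure from Lemma~\ref{lem:smoothing}, and the hemisphere estimate \eqref{eq:hemisphere} through the same computation used for the second estimate of Lemma~\ref{lem:rounding-estimates}.

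For the degree, we use the inclusions \(B_{\delta/2}(p_0)\subset P_\lambda\subset B_R(p_0)\) and the fact that every ray from \(p_0\) meets \(\Sigma_\lambda\) exactly once. Together these show that \(q|_{\Sigma_\lambda}\) is a bijection onto \(\mathbb S^{n-1}\). In fact it is a homeomorphism, with inverse \(\xi\mapsto p_0+r_\lambda(\xi)\xi\), where \(r_\lambda\) is the radial function of the convex body \(P_\lambda\). It is orientation preserving when \(\Sigma_\lambda\) is oriented as the boundary of \(P_\lambda\): the straight-line homotopy \(x\mapsto p_0+\bigl((1-t)r_\lambda(q(x))+t\,\tfrac{\delta}{4}\bigr)q(x)\) deforms \(\Sigma_\lambda\) inside \(P_\lambda\setminus\{p_0\}\) onto a round sphere centred at \(p_0\), and on that sphere \(q\) visibly has degree one. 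Alternatively, we can compute the degree at a regular point. Since \(\Sigma_\lambda\) is smooth and convex with \(p_0\) in its interior, \(\langle \nu^{\mathrm{euc}}_\lambda(x),x-p_0\rangle>0\). This positivity makes \(dq\) an orientation-preserving isomorphism at each point, so all local degrees equal \(+1\); combined with bijectivity, the degree is one.

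For \eqref{eq:hemisphere}, fix \(x\in\Sigma_\lambda\) and \(a\in I_\lambda(x)\), so that \(u_a(x)>-2/\lambda\). As in the proof of Lemma~\ref{lem:rounding-estimates}, set \(\delta=\min_b(-u_b(p_0))>0\). Since \(u_a\) is affine with \(\nabla u_a=N_a\), we get
\[
\langle x-p_0,N_a\rangle=u_a(x)-u_a(p_0)\geq \delta-\tfrac{2}{\lambda}\geq\tfrac{\delta}{2}
\]
once \(\lambda\geq 4/\delta\). Because \(x\in P\subset B_R(p_0)\), dividing by \(|x-p_0|\leq R\) gives \(\langle q(x),N_a\rangle\geq \delta/(2R)=:c_0\). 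This constant is independent of \(\lambda\).

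The main obstacle is the orientation bookkeeping in the degree claim, namely making sure the sign is \(+1\) rather than merely \(\pm1\). Convexity together with \(p_0\in P_\lambda^\circ\) makes this routine, because the radial projection of a star-shaped smooth hypersurface is a diffeomorphism. If one prefers to avoid differentiating \(r_\lambda\), the homotopy argument above gives the conclusion directly, since degree is homotopy invariant and the homotopy never passes through \(p_0\).
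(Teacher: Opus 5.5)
Your proposal is correct and follows the same route as the paper: degree one comes from each ray from \(p_0\) meeting \(\Sigma_\lambda\) exactly once, and \eqref{eq:hemisphere} is the same computation \(\langle x-p_0,N_a\rangle=u_a(x)-u_a(p_0)\geq\delta-2/\lambda\), divided by the bound \(|x-p_0|\leq R\). Your orientation bookkeeping goes beyond what the paper writes, but only the local-degree argument via \(\langle\nu_\lambda^{\mathrm{euc}},x-p_0\rangle>0\) actually fixes the sign: your straight-line homotopy \(H_t\) satisfies \(q\circ H_t=q\) for all \(t\), so it carries no information on its own (the working version of that idea extends \(q\) over \(P_\lambda\setminus B_{\delta/4}(p_0)\), whose oriented boundary is \(\Sigma_\lambda\) minus the small sphere).
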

\begin{proof}
    As in Lemma~\ref{lem:smoothing}, every ray from \(p_0\) meets \(\Sigma_\lambda\) in exactly one point. Hence \(q|_{\Sigma_\lambda}\) has degree one.

    Set \(\delta:=\min_a\left(-u_a(p_0)\right)>0\). If \(a\in I_\lambda(x)\), then \(u_a(x)>-2/\lambda\) and therefore
    \[
    \langle q(x), N_a\rangle=\frac{u_a(x)-u_a(p_0)}{|x-p_0|}\geq\frac{\delta-2/\lambda}{|x-p_0|}.
    \]
    Since \(|x-p_0|\) is uniformly bounded on \(P\), this proves \eqref{eq:hemisphere} for all sufficiently large \(\lambda\).
\end{proof}

We next make a smooth transition between \(q\) and \(\zeta_\lambda\). By a standard Whitney decomposition argument, there is a function \(r\), smooth away from \(K_P\) such that on a neighborhood of \(P\),
\begin{equation}\label{eq:regularized-distance}
C^{-1}d_g(x,K_P)\leq r(x)\leq Cd_g(x,K_P),\qquad |dr(x)|_g\leq C.
\end{equation}

Set \(\delta_\lambda=\lambda^{-1/4}\), so that \(\lambda^{-1}=\delta_\lambda^4=o(\delta_\lambda^2)\). Choose a smooth log cutoff function \(\tau_\lambda:[0,\infty)\to[0,1 ]\) such that
\[
\tau_\lambda(t)=0\quad (t\leq\delta_\lambda^2),\qquad\tau_\lambda(t)=1\quad (t\geq\delta_\lambda),\qquad |\tau_\lambda^\prime(t)|\leq \frac{C}{t|\log\delta_\lambda|}.
\]
Note that \(\tau_\lambda(r)\) is smooth across \(K_P\).

On \(\{r>\delta_\lambda^2\}\), \eqref{eq:regularized-distance} gives \(d_g(x,K_P)\geq c\delta_\lambda^2\). Since \(\lambda^{-1}=o(\delta_\lambda^2)\), Lemma~\ref{lem:zeta-estimate} applies for all sufficiently large \(\lambda\). Moreover \eqref{eq:hemisphere}, \eqref{eq:spherical-interpolation} and the concavity of \(\sin\) on \([0,\pi]\) give
\begin{equation}\label{eq:q-zeta-angle}
    \langle q,\zeta_\lambda\rangle\geq c_0.
\end{equation}
Thus \(q(x)\neq-\zeta_\lambda(x)\). So we can define
\begin{equation}\label{eq:eta-definition}
\eta_\lambda(x)=\begin{cases}
    q(x), &r(x)\leq\delta_\lambda^2,\\[2mm]
    \Gamma\left(q(x),\zeta_\lambda(x),\tau_\lambda(r(x))\right), &r(x)>\delta_\lambda^2.
\end{cases}
\end{equation}

\begin{lemma}\label{lem:eta}
For all sufficiently large \(\lambda\), \(\eta_\lambda:\Sigma_\lambda\to\mathbb S^{n-1}\) is smooth and has degree one. Moreover, \(\eta_\lambda=\zeta_\lambda\) on \(\{r\geq\delta_\lambda\}\). For every \(a\) and every \(V\Subset F_a^\circ\), there exists \(\lambda_V>0\) such that \(\eta_\lambda=N_a\) on \(V\) for all \(\lambda\geq\lambda_V\).
\end{lemma}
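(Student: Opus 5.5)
Smoothness of \(\eta_\lambda\) reduces to three observations. On \(\{r>\delta_\lambda^2\}\) we have \(d_g(x,K_P)\geq c\delta_\lambda^2>C_0\lambda^{-1}\) for large \(\lambda\). Hence Lemma~\ref{lem:zeta-smooth} makes \(\zeta_\lambda\) smooth there. The function \(r\) is smooth away from \(K_P\), so \(\tau_\lambda(r)\) is smooth. By \eqref{eq:q-zeta-angle} the pair \((q,\zeta_\lambda)\) stays in the region \(\{\langle U,V\rangle\geq c_0\}\), on which \((U,V,t)\mapsto\Gamma(U,V,t)\) is smooth by \eqref{eq:spherical-interpolation}. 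Thus \(\eta_\lambda\) is smooth on \(\{r>\delta_\lambda^2\}\).

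On the open set \(\{r<\delta_\lambda^2\}\cup\{\tau_\lambda(r)=0\}\), both branches of \eqref{eq:eta-definition} equal \(q\), since \(\Gamma(U,V,0)=U\). The function \(q\) is smooth on \(\Sigma_\lambda\) because \(p_0\notin\Sigma_\lambda\). Near any point with \(r=\delta_\lambda^2\), the cutoff \(\tau_\lambda(r)\) vanishes identically, because \(\tau_\lambda=0\) on \([0,\delta_\lambda^2]\) and \(\tau_\lambda\) is smooth, so it is flat there. Hence \(\eta_\lambda=\Gamma(q,\zeta_\lambda,\tau_\lambda(r))\) on a neighborhood intersecting \(\{r>\delta_\lambda^2\}\), and equals \(q\) on the rest. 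These two expressions patch smoothly. The one point needing care is that \(\zeta_\lambda\) is defined only where \(r>\delta_\lambda^2\). However, \(\zeta_\lambda\) enters only multiplied through \(\Gamma\) at parameter \(0\) on the boundary. So I would phrase the argument as: on a neighborhood of a point with \(r=\delta_\lambda^2\) on which \(d_g(\cdot,K_P)>C_0\lambda^{-1}\) still holds, \(\zeta_\lambda\) is smooth. This is available because \(r\geq\delta_\lambda^2/2\) nearby gives \(d_g\geq c\delta_\lambda^2/2\gg\lambda^{-1}\). The formula \(\Gamma(q,\zeta_\lambda,\tau_\lambda(r))\) then holds on the whole neighborhood.

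For the equality on \(\{r\geq\delta_\lambda\}\), note that there \(\tau_\lambda(r)=1\), so \(\eta_\lambda=\Gamma(q,\zeta_\lambda,1)=\zeta_\lambda\).

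For the face statement, take \(V\Subset F_a^\circ\). Then \(\overline V\) has positive distance \(\varepsilon\) from \(K_P\), indeed from every other facet. By Lemma~\ref{lem:smoothing}, for \(\lambda\geq\lambda_V\) the hypersurface \(\Sigma_\lambda\) coincides with \(F_a\) near \(\overline V\), so \(V\subset\Sigma_\lambda\). Enlarging \(\lambda_V\) so that \(2/\lambda<\min_{b\neq a}\inf_V(-u_b)\), we get \(I_\lambda=\{a\}\) on \(V\), and hence \(\zeta_\lambda=N_a\) there. Since \(r\geq C^{-1}\varepsilon\geq\delta_\lambda\) on \(V\) once \(\lambda^{-1/4}\leq C^{-1}\varepsilon\), the previous step gives \(\eta_\lambda=\zeta_\lambda=N_a\) on \(V\).

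The degree is the main conceptual step. I would use the homotopy \(H_t=\Gamma\bigl(q,\zeta_\lambda,t\,\tau_\lambda(r)\bigr)\), \(t\in[0,1]\), with \(H_t=q\) where \(r\leq\delta_\lambda^2\). Each \(H_t\) is continuous, indeed smooth, by the same reasoning as above, and the geodesic is well defined because \(\langle q,\zeta_\lambda\rangle\geq c_0>0\). We have \(H_0=q\) and \(H_1=\eta_\lambda\). Since \(\Sigma_\lambda\) is a closed connected hypersurface, degree is a homotopy invariant, so \(\deg\eta_\lambda=\deg q|_{\Sigma_\lambda}=1\) by Lemma~\ref{lem:q-degree}.
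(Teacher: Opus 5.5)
Your proof is correct and follows essentially the same route as the paper. For the degree you use the same homotopy \(\Gamma(q,\zeta_\lambda,\sigma\tau_\lambda(r))\). You use \(\tau_\lambda=1\) on \(\{r\geq\delta_\lambda\}\), and for the face statement you use \(I_\lambda=\{a\}\) and \(r\geq\delta_\lambda\) on \(V\). Your smoothness discussion just expands what the paper says follows directly from the definitions.

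There is one slip. Contrary to what you wrote, \(\tau_\lambda(r)\) does not vanish identically near points with \(r=\delta_\lambda^2\), and \(\{\tau_\lambda(r)=0\}\) is not open. Neither claim is needed, though. Your final version is what proves smoothness: \(\zeta_\lambda\) is already smooth with \(\langle q,\zeta_\lambda\rangle\geq c_0\) on \(\{r>\delta_\lambda^2/2\}\), and \(\Gamma(q,\zeta_\lambda,0)=q\).
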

\begin{proof}
   The smoothness of \(\eta_\lambda\) follows directly from \eqref{eq:q-zeta-angle}, \eqref{eq:eta-definition} and the choice of \(\tau_\lambda\). For \(0\leq\sigma\leq 1\), define
   \[
   H_\sigma(x)=\begin{cases}
       q(x), &r(x)\leq\delta_\lambda^2,\qquad\\[2mm]
       \Gamma\left(q(x),\zeta_\lambda(x),\sigma\tau_\lambda(r(x))\right), &r(x)>\delta_\lambda^2.
   \end{cases}
   \]
   Then \(H_0=q|_{\Sigma_\lambda}\) and \(H_1=\eta_\lambda\). Thus \(\operatorname{deg }\eta_\lambda=\operatorname{deg}(q|_{\Sigma_\lambda})=1\) by Lemma~\ref{lem:q-degree}.

If \(r\geq\delta_\lambda\), then \(\tau_{\lambda}(r)=1\), and hence \(\eta_\lambda=\zeta_\lambda\). Finally, for fixed \(a\) and \(V\Subset F_a^\circ\), Lemma~\ref{lem:smoothing} and \(\delta_\lambda\to 0\) give \(\lambda_V>0\) such that \(I_\lambda(x)=\{a\}\) and \(r(x)\geq\delta_\lambda\) for every \(x\in V\) and \(\lambda\geq\lambda_V\). Hence \(\eta_\lambda=\zeta_\lambda=N_a\) on \(V\).
\end{proof}

To estimate \(d\eta_\lambda\), we use the following elementary property of \(\Gamma\).
\begin{lemma}\label{lem:spherical-contraction}
For \(U\in\mathbb S^{n-1}\) and \(0\leq\tau\leq 1\), the map
\[
N\mapsto\Gamma(U,N,\tau)
\]
is \(1\)-Lipschitz from \(\{N\in\mathbb S^{n-1}:\langle U, N\rangle>0\}\) to itself with respect to the spherical metric.
\end{lemma}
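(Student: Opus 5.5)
We need to show that, for fixed \(U\) and \(\tau\in[0,1]\), the map \(N\mapsto\Gamma(U,N,\tau)\) sends the open hemisphere \(\{\langle U,N\rangle>0\}\) into itself and is \(1\)-Lipschitz for the spherical metric. The plan is to reduce the Lipschitz bound to a pointwise estimate on the differential and then integrate along curves. This is legitimate because the hemisphere is geodesically convex: the minimizing geodesic between two of its points stays inside it.

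\emph{Invariance.} Write \(\theta=d(U,N)<\pi/2\). The point \(\Gamma(U,N,\tau)\) lies on the minimizing arc from \(U\) to \(N\), at distance \(\tau\theta\leq\theta<\pi/2\) from \(U\). Hence \(\langle U,\Gamma(U,N,\tau)\rangle=\cos(\tau\theta)>0\), so the hemisphere is mapped into itself.

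\emph{Geodesic polar coordinates.} Use coordinates centered at \(U\): write \(N=\cos\theta\,U+\sin\theta\,\omega\), where \(\omega\) is a unit vector orthogonal to \(U\). By \eqref{eq:spherical-interpolation}, the map is then \((\theta,\omega)\mapsto(\tau\theta,\omega)\). At \(N=U\) it is smooth by the smooth extension in \eqref{eq:spherical-interpolation}.

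\emph{Differential estimate.} The round metric in these coordinates is \(d\theta^2+\sin^2\theta\,|d\omega|^2\).
\begin{itemize}
\item In the radial direction, the differential multiplies lengths by \(\tau\leq1\).
\item In directions tangent to the geodesic sphere, it multiplies lengths by \(\sin(\tau\theta)/\sin\theta\).
\item These two directions are orthogonal both before and after the map, so the operator norm of the differential is \(\max\{\tau,\sin(\tau\theta)/\sin\theta\}\).
\end{itemize}
For \(0<\theta<\pi/2\), and indeed for all \(\theta\in(0,\pi)\), monotonicity of \(\sin\) on \([0,\pi/2]\) gives \(\sin(\tau\theta)\leq\sin\theta\), since \(\tau\theta\leq\theta\). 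At \(\theta=0\) the differential is \(\tau\cdot\mathrm{id}\), which also has norm at most \(1\). So the differential has operator norm at most \(1\) everywhere on the hemisphere.

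\emph{Conclusion.} Take two points \(N_1,N_2\) of the hemisphere and join them by their minimizing geodesic, which stays in the hemisphere by convexity. The image of this geodesic is a curve from \(\Gamma(U,N_1,\tau)\) to \(\Gamma(U,N_2,\tau)\) whose length is at most \(d(N_1,N_2)\), by the differential estimate. Therefore \(d\bigl(\Gamma(U,N_1,\tau),\Gamma(U,N_2,\tau)\bigr)\leq d(N_1,N_2)\).

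The main point requiring care is the norm computation: the tangential stretch factor \(\sin(\tau\theta)/\sin\theta\) must be checked, along with the fact that the radial and tangential directions are mapped orthogonally, and the coordinate singularity at \(N=U\) must be handled. Everything else is routine.
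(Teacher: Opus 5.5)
Your proof is correct and takes the same route as the paper's: in geodesic polar coordinates \((\theta,\omega)\) centered at \(U\), the metric is \(d\theta^2+\sin^2\theta\,g_{\mathbb S^{n-2}}\) and the map is \((\theta,\omega)\mapsto(\tau\theta,\omega)\); you additionally spell out the stretch factors \(\tau\) and \(\sin(\tau\theta)/\sin\theta\), the point \(N=U\), and the integration along minimizing geodesics in the convex hemisphere. One correction: your parenthetical claim that \(\sin(\tau\theta)\leq\sin\theta\) for all \(\theta\in(0,\pi)\) is false (take \(\theta=3\pi/4\), \(\tau=2/3\)), and in fact \(\sin(\tau\theta)/\sin\theta\to\infty\) as \(\theta\to\pi\) for fixed \(\tau\in(0,1)\), which is exactly why the lemma is restricted to the hemisphere; your argument only uses \(\theta<\pi/2\), so nothing else is affected.
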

\begin{proof}
    In polar coordinates centered at \(U\), the spherical metric is 
    \[
    d\vartheta^2+\sin^2\vartheta g_{\mathbb S^{n-2}},\qquad 0\leq\vartheta<\frac{\pi}{2}.
    \]
    In these coordinates, the map sends \((\vartheta,\omega)\) to \((\tau\vartheta,\omega)\), and is therefore \(1\)-Lipschitz.
\end{proof}

Define 
\begin{equation}\label{eq:weight-definition}
w_\lambda=\frac{\mathbf 1_{\{\delta_\lambda^2<r<\delta_\lambda\}}}{r|\log\delta_\lambda|}\qquad\text{on }\Sigma_\lambda.
\end{equation}
Here \(\mathbf 1_E\) denotes the characteristic function of \(E\). For \(t\in\mathbb R\), write \(t_+=\max\{t,0\}\).

\begin{lemma}\label{lem:eta-estimate}
There exists \(C>0\) such that, for all sufficiently large \(\lambda\),
\begin{equation}\label{eq:eta-derivative}
\|d\eta_\lambda\|_{\operatorname{tr}}\leq\begin{cases}
    C, &r\leq\delta_\lambda^2,\\[1mm]
    \|d\zeta_\lambda\|_{\operatorname{tr}}+C(1+w_\lambda), &r>\delta_\lambda^2.
\end{cases}
\end{equation}
Consequently, \eqref{eq:zeta-estimate} gives
\begin{equation}\label{eq:eta-curvature}
\left(\|d\eta_\lambda\|_{\operatorname{tr}}-H_{\Sigma_\lambda}^g\right)_+\leq C(1+w_\lambda)\qquad\text{on }\{r>\delta_\lambda^2\}.
\end{equation}
\end{lemma}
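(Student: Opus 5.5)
We treat the two regions of the definition \eqref{eq:eta-definition} separately; in both the work is to bound \(\|d\eta_\lambda\|_{\operatorname{tr}}\) by sums of \(|d\eta_\lambda(X_i)|\) over a \(g\)-orthonormal basis \(X_1,\ldots,X_{n-1}\) of \(T_x\Sigma_\lambda\), since \(\|B\|_{\operatorname{tr}}\leq\sum_i|BX_i|\) for any orthonormal basis.

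\emph{Region \(r\leq\delta_\lambda^2\).} Here \(\eta_\lambda=q\) is the restriction of a fixed smooth map on \(P\setminus\{p_0\}\). Because \(P_\lambda\supset B_{\delta/2}(p_0)\) by the proof of Lemma~\ref{lem:smoothing}, \(|x-p_0|\geq\delta/2\) on \(\Sigma_\lambda\). Hence \(|dq|\) is bounded independently of \(\lambda\), and \(g\) is uniformly equivalent to the Euclidean metric. This gives \(\|d\eta_\lambda\|_{\operatorname{tr}}\leq C\).

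\emph{Region \(r>\delta_\lambda^2\).} Write \(\eta_\lambda(x)=G(q(x),\zeta_\lambda(x),\tau_\lambda(r(x)))\) with \(G(U,N,t)=\Gamma(U,N,t)\). By the chain rule,
\[
d\eta_\lambda=\partial_U G\circ dq+\partial_N G\circ d\zeta_\lambda+\partial_tG\,\tau_\lambda'(r)\,dr.
\]
We bound the three terms in turn.

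\begin{enumerate}
\item \textbf{The \(\zeta_\lambda\)-term.} By \eqref{eq:q-zeta-angle}, \(\zeta_\lambda\) lies in the open hemisphere \(\{\langle q,\cdot\rangle>0\}\). Lemma~\ref{lem:spherical-contraction} then says \(\partial_NG\) has operator norm at most \(1\), so \(\|\partial_NG\circ d\zeta_\lambda\|_{\operatorname{tr}}\leq\|d\zeta_\lambda\|_{\operatorname{tr}}\).
\item \textbf{The \(q\)-term.} Since \(\langle q,\zeta_\lambda\rangle\geq c_0\), the angle between \(U\) and \(N\) stays in \([0,\arccos c_0]\), a compact subset of \([0,\pi/2)\). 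On this set formula \eqref{eq:spherical-interpolation} is smooth with bounded derivatives, the case \(\theta=0\) included. So \(|\partial_UG|\leq C\), and combined with \(|dq|\leq C\) this term contributes at most \(C\).
\item \textbf{The \(t\)-term.} We have \(|\partial_tG|=\theta\leq\pi\) and \(|dr|_g\leq C\) by \eqref{eq:regularized-distance}. The cutoff satisfies \(\tau_\lambda'=0\) outside \((\delta_\lambda^2,\delta_\lambda)\) and \(|\tau_\lambda'(r)|\leq C/(r|\log\delta_\lambda|)\) inside it. This term is a rank-one map, so its trace norm equals its operator norm, which is at most \(Cw_\lambda\) by \eqref{eq:weight-definition}.
\end{enumerate}

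The trace norm is subadditive, so summing the three bounds gives \eqref{eq:eta-derivative}.

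\emph{Consequence \eqref{eq:eta-curvature}.} On \(\{r>\delta_\lambda^2\}\), \eqref{eq:regularized-distance} gives \(d_g(x,K_P)\geq c\delta_\lambda^2\gg C_0\lambda^{-1}\). So Lemma~\ref{lem:zeta-estimate} applies and yields \(\|d\zeta_\lambda\|_{\operatorname{tr}}\leq H_{\Sigma_\lambda}^g+C\). Substituting this into \eqref{eq:eta-derivative} and taking positive parts gives \eqref{eq:eta-curvature}.

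The main point needing care is the uniform smoothness of \(\Gamma\) in the \(U\)-variable near \(\theta=0\), and the requirement that the hemisphere condition hold for the contraction lemma. Both are supplied by \eqref{eq:q-zeta-angle}, which keeps the angle between \(q\) and \(\zeta_\lambda\) uniformly below \(\pi/2\).
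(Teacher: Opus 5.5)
Your proposal is correct and follows essentially the same route as the paper. Both split \(d\eta_\lambda\) by the chain rule into the \(q\)-, \(\zeta_\lambda\)- and \(\tau\)-terms, bound the first and last using the compactness supplied by \eqref{eq:q-zeta-angle}, bound the middle term by Lemma~\ref{lem:spherical-contraction}, and then insert \eqref{eq:zeta-estimate}. You also spell out some details the paper leaves implicit: the bound \(|x-p_0|\geq\delta/2\), that the rank-one term has trace norm equal to its operator norm, and the smoothness of \(\Gamma\) at \(\theta=0\).
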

\begin{proof}
    On \(\{r\leq\delta_\lambda^2\}\), one has \(\eta_\lambda=q\), so \(\|d\eta_\lambda\|_{\operatorname{tr}}=\|dq\|_{\operatorname{tr}}\leq C\).

    Suppose that \(r>\delta_\lambda^2\). By \eqref{eq:q-zeta-angle}, the triples \(\left(q,\zeta_\lambda,\tau_\lambda(r)\right)\) lie in a fixed compact subset of the domain of \(\Gamma\). The smoothness of \(\Gamma\) therefore gives
    \[
    \|\partial_U\Gamma\circ dq\|_{\operatorname{tr}}\leq C,\qquad \|\partial_\tau\Gamma\otimes d(\tau_\lambda(r))\|_{\operatorname{tr}}\leq C|d(\tau_\lambda(r))|_g\leq Cw_\lambda.
    \]
    Moreover, for fixed \(U\) and \(\tau\), Lemma~\ref{lem:spherical-contraction} gives \(|\partial_N\Gamma[\xi]|\leq|\xi|\) for every tangent vector \(\xi\), and consequently
\[
\|\partial_N\Gamma\circ d\zeta_\lambda\|_{\operatorname{tr}}\leq\|d\zeta_\lambda\|_{\operatorname{tr}}.
\]
Applying these estimates to 
\[
d\eta_\lambda=\partial_U\Gamma\circ dq+\partial_N\Gamma\circ d\zeta_\lambda+\partial_\tau\Gamma\otimes d(\tau_\lambda(r))
\]
gives \eqref{eq:eta-derivative}. Estimate \eqref{eq:eta-curvature} then follows from \eqref{eq:zeta-estimate}.
\end{proof}

\section{Proof of the main theorem}

We use the Dirac boundary formulation from \cite[Section~2]{Brendle}; see also \cite[Section~2]{BaerBrendleChowHanke}. We begin with the corresponding boundary estimate.

\subsection{The Dirac boundary estimate}

We denote by \(\mathrm{Cl}(\mathbb R^n)\) the complex Clifford algebra generated by \(\mathbb R^n\) subject to the relations
\[
vw+wv=-2\langle v,w\rangle,\qquad v,w\in\mathbb R^n.
\]
Let \((\Delta_n,\omega)\) be a Hermitian \(\mathrm{Cl}(\mathbb R^n)\)-module. Thus \(\Delta_n\) is a complex Hermitian vector space and \(\omega: \mathrm{Cl}(\mathbb R^n)\to\operatorname{End}_{\mathbb C}(\Delta_n)\) is a complex algebra homomorphism such that \(\omega(v)^*=-\omega(v)\) for every \(v\in\mathbb R^n\), where \(^{*}\) denotes the adjoint with respect to the Hermitian inner product on \(\Delta_n\).

Let \((Y^n, g)\) be a compact oriented Riemannian spin manifold with smooth boundary \(\Sigma\). Let \(S_Y\) be its spinor bundle, with Clifford multiplication \(c\). Viewing \(\Delta_n\) as a trivial bundle on \(Y\), the spin connection induces a connection on \(\operatorname{Hom}(\Delta_n,S_Y)\cong S_Y\otimes\Delta_n^*\). The corresponding Dirac operator is 
\[
DA=\sum_{j=1}^nc(e_j)\nabla_{e_j}A.
\]
Let \(\nu\) be the outward unit normal of \(\Sigma=\partial Y\). For a local orthonormal frame \(e_1,\ldots,e_{n-1}\) of \(T\Sigma\), define
\[
\mathcal D^\Sigma A=\sum_{j=1}^{n-1 }c(\nu)c(e_j)\nabla_{e_j}A+\frac{1}{2}H_\Sigma^gA.
\]
Given a smooth map \(\eta:\Sigma\to\mathbb S^{n-1}\), set
\[
\chi_\eta A=-c(\nu)A\omega(\eta).
\]

\begin{proposition}\label{prop:dirac-boundary-estimate}
Suppose that a smooth section \(A\) of \(\operatorname{Hom}(\Delta_n,S_Y)\) satisfies
\[
DA=0\quad\text{in }Y,\qquad \chi_\eta A=A\quad\text{on }\Sigma.
\]
Then
\begin{equation}\label{eq:dirac-boundary-estimate}
\int_Y|\nabla A|^2+\frac{1}{4}\int_YR_g|A|^2\leq\frac{1}{2}\int_\Sigma\left(\|d\eta\|_{\operatorname{tr}}-H_{\Sigma}^g\right)_+|A|^2.
\end{equation}
\end{proposition}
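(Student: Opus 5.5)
The plan is to integrate the Weitzenböck (Schrödinger--Lichnerowicz) formula for the twisted Dirac operator on \(\operatorname{Hom}(\Delta_n,S_Y)\) and then control the boundary term algebraically, using the boundary condition \(\chi_\eta A=A\). Since \(\Delta_n\) is a trivial bundle with a flat connection, the twisting contributes no curvature term, and we get \(D^2=\nabla^*\nabla+\frac14R_g\). Integrating \(\langle D^2A,A\rangle-\langle\nabla^*\nabla A,A\rangle\) by parts and writing the normal derivative in terms of the tangential operator in the standard way (as in \cite[Section~2]{Brendle} or \cite{BaerBrendleChowHanke}), I expect the identity
\[
\int_Y|\nabla A|^2+\frac14\int_YR_g|A|^2=\int_Y|DA|^2-\int_\Sigma\langle \mathcal D^\Sigma A,A\rangle+\frac12\int_\Sigma H^g_\Sigma|A|^2 .
\]
The \(\frac12 H\) built into \(\mathcal D^\Sigma\) is exactly the term that appears when the tangential part of the ambient connection is replaced by the boundary spin connection. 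I will fix the sign conventions by checking this on a round ball in \(\mathbb R^n\) with a parallel \(A\). Once \(DA=0\) is used, the claim reduces to the pointwise-integrated bound
\[
-\int_\Sigma\langle\mathcal D^\Sigma A,A\rangle+\frac12\int_\Sigma H^g_\Sigma|A|^2\leq\frac12\int_\Sigma\|d\eta\|_{\operatorname{tr}}|A|^2-\frac12\int_\Sigma H^g_\Sigma|A|^2,
\]
after which the right side is bounded by the positive part.

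Next I exploit \(\chi_\eta\). Because \(c(\nu)\) and \(\omega(\eta)\) are skew-adjoint and square to \(-1\), and they act on opposite sides, \(\chi_\eta\) is a self-adjoint involution. Set \(\mathcal D_0=\mathcal D^\Sigma-\frac12H^g_\Sigma\), the first-order part. It anticommutes with \(c(\nu)\cdot\), because \(c(\nu)c(e_j)\) anticommutes with \(c(\nu)\) for \(e_j\perp\nu\). It commutes with right multiplication by the constant \(\omega(\eta)\), except for the derivative of \(\eta\). Hence
\[
\mathcal D_0\chi_\eta A+\chi_\eta\mathcal D_0A=\sum_{j}c(e_j)A\,\omega(d\eta(e_j))
\]
up to a sign that I will track. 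Here the connection term involving \(\nabla_{e_j}\nu\), that is, the second fundamental form, has to be checked to cancel or be absorbed. I expect it to be absorbed because the mean curvature term has already been separated off in \(\mathcal D^\Sigma\). Pairing with \(A\) and using \(\chi_\eta A=A\) and the self-adjointness of \(\chi_\eta\) then gives
\[
2\int_\Sigma\langle\mathcal D_0A,A\rangle=\int_\Sigma\Bigl\langle\sum_jc(e_j)A\,\omega(d\eta(e_j)),A\Bigr\rangle+(\text{terms integrating to zero}).
\]
This uses that \(\mathcal D_0\) is formally self-adjoint on the closed manifold \(\Sigma\), so that \(\int\langle\mathcal D_0A,\chi A\rangle=\int\langle\chi\mathcal D_0 A,A\rangle\).

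The final step is a pointwise estimate via the singular value decomposition. At each point I choose a \(g\)-orthonormal basis \(e_j\) of \(T_x\Sigma\) and an orthonormal family \(\epsilon_j\) in \(T_{\eta}\mathbb S^{n-1}\) with \(d\eta(e_j)=\sigma_j\epsilon_j\) and \(\sigma_j\geq0\). Each term \(\langle c(e_j)A\omega(\epsilon_j),A\rangle\) then has modulus at most \(|A|^2\), since both Clifford actions are isometric for unit vectors. This gives
\[
\Bigl|\sum_j\langle c(e_j)A\,\omega(d\eta(e_j)),A\rangle\Bigr|\leq\|d\eta\|_{\operatorname{tr}}|A|^2 .
\]
Combining this with the integral identity yields \eqref{eq:dirac-boundary-estimate}, with the \(\frac12H\) terms adding up to \(-\frac12H\) on the right.

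The main obstacle is the anticommutator computation. I need the exact sign conventions so that the boundary term really comes out as \(-\frac12H|A|^2\) plus the \(d\eta\) term, rather than with the opposite sign. I also need to confirm that the second fundamental form contributions in \(\nabla_{e_j}(c(\nu)\,\cdot)\) are exactly those accounted for by \(\frac12H\). I would first verify all signs on the Euclidean ball with \(\eta=\nu\) and \(A\) parallel, where both sides must be equal to zero.
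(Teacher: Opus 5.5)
You have the same architecture as the paper: the integrated Schr\"odinger--Lichnerowicz formula, the anticommutator of the boundary operator with \(\chi_\eta\), and the singular-value bound. However, both formulas that carry the mean curvature are wrong as stated, and with them the \(-\frac12H^g_\Sigma\) never appears.

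\emph{First error: the sign of the boundary term.} Integration by parts gives
\[
\int_Y|\nabla A|^2+\frac14\int_YR_g|A|^2=\int_Y|DA|^2+\operatorname{Re}\int_\Sigma\langle c(\nu)DA+\nabla_\nu A,A\rangle .
\]
Moreover, \(c(\nu)DA+\nabla_\nu A=\sum_jc(\nu)c(e_j)\nabla_{e_j}A=\mathcal D_0A\). So the boundary contribution is \(+\int_\Sigma(\operatorname{Re}\langle\mathcal D^\Sigma A,A\rangle-\frac12H^g_\Sigma|A|^2)\), exactly as in the paper, and not its negative. Your proposed check cannot detect this. For parallel \(A\) one has \(\mathcal D_0A=0\), so either sign gives \(0=0\). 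A linear harmonic spinor on the unit ball does detect it, since there \(\int_B|\nabla\psi|^2=\int_{\partial B}|\psi|^2>0\).

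\emph{Second error, and the real missing idea: \(\mathcal D_0\) does not anticommute with \(c(\nu)\).} We have \(\nabla_{e_j}(c(\nu)A)=c(\nabla_{e_j}\nu)A+c(\nu)\nabla_{e_j}A\). By symmetry of the second fundamental form, \(\sum_jc(\nu)c(e_j)c(\nabla_{e_j}\nu)=-H^g_\Sigma\,c(\nu)\). Hence
\[
\chi_\eta\mathcal D_0A+\mathcal D_0(\chi_\eta A)=-H^g_\Sigma\,\chi_\eta A-\sum_jc(e_j)A\,\omega(d\eta(e_j)).
\]
The second fundamental form is not absorbed; it is precisely the source of the \(-\frac12H^g_\Sigma\) in the estimate. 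Equivalently, it is \(\mathcal D^\Sigma=\mathcal D_0+\frac12H^g_\Sigma\), not \(\mathcal D_0\), that anticommutes with \(c(\nu)\). That gives the identity the paper takes from Brendle:
\[
\chi_\eta\mathcal D^\Sigma A+\mathcal D^\Sigma(\chi_\eta A)=-\sum_jc(e_j)A\,\omega(d\eta(e_j)).
\]
Your own test case refutes your version. On the unit ball with \(\eta=\nu\) and \(A\) parallel with \(c(v)A=A\omega(v)\), the left side vanishes, while \(\sum_jc(e_j)A\,\omega(e_j)=-(n-1)A\).

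\emph{Consequences.} With your two formulas the \(H\)-terms cancel, and you only obtain \(\frac12\int_\Sigma\|d\eta\|_{\operatorname{tr}}|A|^2\). Correcting only the anticommutator gives \(+\frac12H^g_\Sigma\), the wrong sign.

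\emph{The fix.} With both corrected, pair with \(A\) and use \(\chi_\eta A=A\) and \(\chi_\eta^*=\chi_\eta\). This gives, pointwise, \(2\operatorname{Re}\langle\mathcal D^\Sigma A,A\rangle\le\|d\eta\|_{\operatorname{tr}}|A|^2\), which together with the correct identity is the claim. No formal self-adjointness on \(\Sigma\) or integration over \(\Sigma\) is needed. The argument is pointwise, because \(\mathcal D^\Sigma(\chi_\eta A)=\mathcal D^\Sigma A\) (only tangential derivatives occur) and \(\langle\chi_\eta\mathcal D^\Sigma A,A\rangle=\langle\mathcal D^\Sigma A,A\rangle\).
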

\begin{proof}
    For \(x\in\Sigma\) and \(U,V\in\operatorname{Hom}(\Delta_n,S_{Y,x})\), the induced Hermitian inner product is \(\langle U, V\rangle=\operatorname{tr}(U^*V)\). Since \(c(\nu)^*=-c(\nu)\) and \(\omega(\eta)^*=-\omega(\eta)\), we have
    \[
    \langle\chi_\eta U, V\rangle=-\operatorname{tr}\left(\omega(\eta)U^*c(\nu)V\right)=-\operatorname{tr}\left(U^*c(\nu)V\omega(\eta)\right)=\langle U, \chi_\eta V\rangle.
    \]
Thus \(\chi_\eta^*=\chi_\eta\).

Let \(e_1,\ldots,e_{n-1}\) be a local orthonormal frame of \(T\Sigma\). A direct computation, as in \cite[Proposition~2.5]{Brendle}, gives
\[
\chi_\eta\mathcal D^\Sigma A+\mathcal D^\Sigma(\chi_\eta A)=-\sum_{j=1}^{n-1}c(e_j)A\omega\left(d\eta(e_j)\right).
\]
Fix \(x\in\Sigma\). We may choose such a local frame around \(x\) and an orthonormal basis \(E_1,\ldots,E_{n-1}\) of \(T_{\eta(x)}\mathbb S^{n-1}\), so that \(d\eta_x(e_j)=\sigma_jE_j\) with \(\sigma_j\geq 0\). Since Clifford multiplication by a unit vector is unitary, \(|\langle c(e_j)A\omega(E_j),A\rangle|\leq |A|^2\). So at \(x\), we have
\[
\begin{aligned}
    2\operatorname{Re}\langle\mathcal D^\Sigma A, A\rangle=&\operatorname{Re}\langle\chi_\eta\mathcal D^\Sigma A+\mathcal D^\Sigma(\chi_\eta A), A\rangle\\
    =&-\sum_{j=1}^{n-1}\sigma_j\operatorname{Re}\langle c(e_j)A\omega(E_j), A\rangle\leq\sum_{j=1}^{n-1}\sigma_j|A|^2=\|d\eta_x\|_{\operatorname{tr}}|A|^2.
\end{aligned}
\]
Finally, since \(DA=0\), the Schr\"odinger--Lichnerowicz formula gives
\[
\begin{aligned}
    \int_Y|\nabla A|^2+\frac{1}{4}\int_YR_g|A|^2=&\int_\Sigma\left(\operatorname{Re}\langle\mathcal D^\Sigma A, A\rangle-\frac{1}{2}H_{\Sigma}^g|A|^2\right)\\
    \leq&\frac{1}{2}\int_\Sigma\left(\|d\eta\|_{\operatorname{tr}}-H_\Sigma^g\right)_+|A|^2.
\end{aligned}
\]
\end{proof}

\subsection{The boundary error}
\label{subsec:boundary-error}
We now estimate the right-hand side of \eqref{eq:dirac-boundary-estimate} for \((Y,\eta)=(P_\lambda,\eta_\lambda)\). The level-set formula gives
\[
H_{\Sigma_\lambda}^g=\frac{1}{|d\mathcal F_\lambda|_g}\sum_a\left[\lambda^2\Phi''(\lambda u_a)\left(|du_a|_g^2-du_a(\nu_\lambda)^2\right)+\lambda\Phi'(\lambda u_a)\operatorname{tr}_{T\Sigma_\lambda}\nabla_g^2u_a\right].
\]
Since \(\Phi''\geq 0\) and \(\nabla_g^2u_a\) are uniformly bounded, 
\[
H_{\Sigma_\lambda}^g\geq -\frac{C\lambda}{|d\mathcal{F}_\lambda|_g}\sum_a\Phi^\prime(\lambda u_a),
\]
where \(C\) is independent of \(\lambda\). On the other hand, \eqref{eq:normal-combination} and the uniform lower bounds for \(|du_a|_g\) give
\[
|d\mathcal{F}_\lambda|_g=\lambda\left|\sum_a\Phi^\prime(\lambda u_a)du_a\right|_g\geq c\lambda\sum_a\Phi^\prime(\lambda u_a).
\]
Hence,
\begin{equation}\label{eq:mean-curvature-lower-bound}
H_{\Sigma_\lambda}^g\geq -C.
\end{equation}

Set \(W_\lambda=\left(\|d\eta_\lambda\|_{\operatorname{tr}}-H_{\Sigma_\lambda}^g\right)_+\). For \(S\subset P\) and \(\rho>0\), write \(\mathcal N_\rho^g(S)=\{x\in P: d_g(x,S)<\rho\}\). 

\begin{lemma}\label{lem:boundary-error-bound}
With \(w_\lambda\) as in \eqref{eq:weight-definition}, there exists \(C>0\) such that, for all sufficiently large \(\lambda\),
\begin{equation}\label{eq:boundary-error-bound}
W_\lambda\leq C\left(\mathbf 1_{\mathcal N_{C\lambda^{-1}}^g(E_P)}+\mathbf 1_{\mathcal N_{C\delta_\lambda}^g(K_P)}+w_\lambda\right)\qquad\text{on }\Sigma_\lambda.
\end{equation}
\end{lemma}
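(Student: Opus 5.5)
Split \(\Sigma_\lambda\) according to the value of \(r\) and treat the two regions \(\{r\leq\delta_\lambda^2\}\) and \(\{r>\delta_\lambda^2\}\) separately, using Lemma~\ref{lem:eta-estimate} in each. The term \(\mathbf 1_{\mathcal N_{C\lambda^{-1}}^g(E_P)}\) will absorb the constant error coming from the codimension-two region. The term \(\mathbf 1_{\mathcal N_{C\delta_\lambda}^g(K_P)}\) will absorb the errors near \(K_P\) and the constant part of the cutoff error.

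\emph{Region \(\{r\leq\delta_\lambda^2\}\).} Here Lemma~\ref{lem:eta-estimate} gives \(\|d\eta_\lambda\|_{\operatorname{tr}}\leq C\), and \eqref{eq:mean-curvature-lower-bound} gives \(-H_{\Sigma_\lambda}^g\leq C\), so \(W_\lambda\leq C\). By \eqref{eq:regularized-distance}, \(d_g(x,K_P)\leq Cr(x)\leq C\delta_\lambda^2\leq C\delta_\lambda\). Hence \(x\in\mathcal N_{C\delta_\lambda}^g(K_P)\), and the bound holds.

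\emph{Region \(\{r>\delta_\lambda^2\}\).} Here \eqref{eq:eta-curvature} gives \(W_\lambda\leq C(1+w_\lambda)\). The task is to localize the constant \(C\). First suppose additionally that \(I_\lambda(x)=\{a\}\). Then \(\zeta_\lambda\equiv N_a\) near \(x\), so \(d\zeta_\lambda=0\); one checks that \(I_\lambda\) is locally constant there, as in the proof of Lemma~\ref{lem:zeta-smooth}. Also \(\mathcal F_\lambda=\Phi(\lambda u_a)\) locally, so \(\Sigma_\lambda\) coincides with \(F_a\) near \(x\) and \(H_{\Sigma_\lambda}^g=H^g_{F_a}\geq0\).

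Next look at Lemma~\ref{lem:eta-estimate} itself. In the expansion \(d\eta_\lambda=\partial_U\Gamma\circ dq+\partial_N\Gamma\circ d\zeta_\lambda+\partial_\tau\Gamma\otimes d(\tau_\lambda(r))\), the term \(\partial_U\Gamma\circ dq\) vanishes when \(\tau_\lambda(r)=1\), since \(\Gamma(U,N,1)=N\). The term \(d(\tau_\lambda(r))\) is supported in \(\{\delta_\lambda^2<r<\delta_\lambda\}\). So on \(\{r\geq\delta_\lambda\}\) we have \(\eta_\lambda=\zeta_\lambda\) exactly, and Lemma~\ref{lem:zeta-estimate} gives \(W_\lambda\leq C\). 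Moreover, \(W_\lambda=0\) there when \(|I_\lambda(x)|=1\). When \(|I_\lambda(x)|=2\), Lemma~\ref{lem:smoothing} gives \(d_g(x,E_P)\leq C\lambda^{-1}\), so the first indicator covers this case.

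On \(\{\delta_\lambda^2<r<\delta_\lambda\}\) the bound is \(W_\lambda\leq C+Cw_\lambda\). By \eqref{eq:regularized-distance}, \(d_g(x,K_P)\leq C\delta_\lambda\), so the constant is absorbed into \(\mathbf 1_{\mathcal N_{C\delta_\lambda}^g(K_P)}\) and the remainder is \(Cw_\lambda\).

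\emph{Main obstacle.} The delicate point is the claim that \(W_\lambda\) vanishes exactly at points of \(\{r\geq\delta_\lambda\}\) with \(|I_\lambda|=1\). This needs two facts. First, the cross terms in Lemma~\ref{lem:eta-estimate} really vanish when \(\tau_\lambda=1\); this is the identity \(\eta_\lambda=\zeta_\lambda\) from Lemma~\ref{lem:eta}, which holds on the open set \(\{r>\delta_\lambda\}\) and extends to its boundary by continuity. Second, the singleton case requires \(H^g_{F_a}\geq0\). Neither step should cause real trouble, and everything else reduces to bookkeeping of constants.
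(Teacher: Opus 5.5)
Your proof is correct and follows essentially the same route as the paper. You bound $W_\lambda\le C$ on $\{r\le\delta_\lambda^2\}\subset\mathcal N^g_{C\delta_\lambda}(K_P)$ and use $W_\lambda\le C(1+w_\lambda)$ on $\{r>\delta_\lambda^2\}$. You then localize the constant using $W_\lambda=0$ where $r\ge\delta_\lambda$ and $I_\lambda=\{a\}$, $|I_\lambda|\ge 2\Rightarrow x\in\mathcal N^g_{C\lambda^{-1}}(E_P)$, and $\{r<\delta_\lambda\}\subset\mathcal N^g_{C\delta_\lambda}(K_P)$; the paper phrases this last step as a contrapositive, but the case split is the same.

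One small correction. $I_\lambda(x)=\{a\}$ does not make $I_\lambda$ locally constant, nor $\Sigma_\lambda$ equal to $F_a$ near $x$, because some $u_b(x)$ may equal $-2/\lambda$ exactly. Nevertheless $d\zeta_\lambda(x)=0$ and $H^g_{\Sigma_\lambda}(x)=H^g_{F_a}(x)$ still hold pointwise, since $\Phi$ vanishes to infinite order at $-2$, so your conclusion is unaffected.
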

\begin{proof}
    On \(\{r\leq\delta_\lambda^2\}\), \eqref{eq:eta-derivative} and \eqref{eq:mean-curvature-lower-bound} give \(W_\lambda\leq C\). By \eqref{eq:regularized-distance},
    \[
    \{r\leq\delta_\lambda^2\}\subset\mathcal N_{C\delta_\lambda^2}^g(K_P)\subset\mathcal N_{C\delta_\lambda}^g(K_P).
    \]
On \(\{r>\delta_{\lambda}^2\}\), \eqref{eq:eta-curvature} gives
\[
W_\lambda\leq C(1+w_\lambda).
\]
Moreover, Lemma~\ref{lem:smoothing} and \eqref{eq:regularized-distance}, together with \(\lambda^{-1}=o(\delta_\lambda^2)\), imply that \(|I_\lambda(x)|\leq 2\). If \(|I_\lambda(x)|=2\), Lemma~\ref{lem:smoothing} further gives \(x\in\mathcal N_{C\lambda^{-1}}^g(E_P)\). If \(I_\lambda(x)=\{a\}\) and \(x\notin \mathcal N_{C\delta_\lambda}^g(K_P)\), then \(r(x)\geq\delta_\lambda\). At such a point, \(d\eta_\lambda=0\) and \(H_{\Sigma_\lambda}^g=H_{F_a}^g\geq 0\), so \(W_\lambda=0\). Therefore
\[
\{r>\delta_\lambda^2\}\cap\{W_\lambda>0\}\subset \mathcal N_{C\lambda^{-1}}^g(E_P)\cup\mathcal N_{C\delta_\lambda}^g(K_P).
\]
Together with the preceding estimates, this proves \eqref{eq:boundary-error-bound}.
\end{proof}

For \(P_\lambda\), we have the following uniform extension and trace estimates.

\begin{lemma}\label{lem:uniform-extension}
There is a bounded neighborhood \(\widetilde P\supset P\) with smooth boundary such that, for every smooth Hermitian vector bundle \(\mathcal V\to \widetilde P\) with a compatible connection, the following hold.

There is a \(C>0\) such that, for all sufficiently large \(\lambda\), there is an extension operator
\[
E_\lambda:W^{1,2}(P_\lambda,\mathcal V)\to W^{1,2}(\widetilde P,\mathcal V)
\]
satisfying \(\|E_\lambda\|\leq C\).

For every open ball \(B\Subset P^\circ\), there is \(C>0\) such that for all sufficiently large \(\lambda\),
\begin{equation}\label{eq:uniform-trace-poincare}
\|\psi\|_{L^2(P_\lambda)}^2+\|\psi\|_{L^{\frac{2(n-1)}{n-2}}(\Sigma_\lambda)}^2\leq C\left(\|\nabla\psi\|_{L^2(P_\lambda)}^2+\|\psi\|_{L^2(B)}^2\right)
\end{equation}
for every \(\psi\in W^{1,2}(P_\lambda,\mathcal V)\).
\end{lemma}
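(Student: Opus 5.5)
The strategy is to transfer everything to a fixed reference domain via the radial bi-Lipschitz parametrization of Lemma~\ref{lem:smoothing}. I use the radial map centred at \(p_0\) to build uniformly bi-Lipschitz homeomorphisms \(\Psi_\lambda:P\to P_\lambda\). Every ray from \(p_0\) meets \(\Sigma_\lambda\) exactly once, and \(B_{\delta/2}(p_0)\subset P_\lambda\subset P\). So I write \(P_\lambda=\{p_0+t\,\rho_\lambda(\theta)\theta:0\le t\le1\}\), with radial function \(\rho_\lambda\) on \(\mathbb S^{n-1}\). The function \(\rho_\lambda\) is bounded above and below, and it is uniformly Lipschitz because the domains are convex and lie between two concentric balls. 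The map \(\Psi_\lambda(p_0+t\rho(\theta)\theta)=p_0+t\rho_\lambda(\theta)\theta\) is then uniformly bi-Lipschitz. Fixing \(\widetilde P\) as a slightly enlarged smooth convex neighbourhood (say a smoothed dilate of \(P\) about \(p_0\)), I get the same uniform bi-Lipschitz structure, with constants independent of \(\lambda\), for the star-shaped annular regions \(\widetilde P\setminus P_\lambda^\circ\).

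\emph{Extension.} Given \(\psi\) on \(P_\lambda\), I define \(E_\lambda\psi\) outside \(P_\lambda\) by radial reflection: for \(x=p_0+t\rho_\lambda(\theta)\theta\) with \(t\in[1,t_{\max}]\), set \(E_\lambda\psi(x)=\psi(p_0+(2-t)\rho_\lambda(\theta)\theta)\) on a collar, then multiply by a fixed cutoff supported in \(\widetilde P\). For bundle-valued sections, I first trivialize \(\mathcal V\) over \(\widetilde P\). This is possible since \(\widetilde P\) is contractible; the connection then differs from \(d\) by a bounded one-form, so \(W^{1,2}\) norms are uniformly equivalent to those of \(\mathbb C^k\)-valued functions. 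The reflection map is bi-Lipschitz with uniform constants, so Lipschitz change of variables gives \(\|E_\lambda\|\le C\). The cutoff only costs a bounded factor.

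\emph{Trace--Poincar\'e.} I pull back by \(\Psi_\lambda\) to the fixed domain \(P\). Write \(\phi=\psi\circ\Psi_\lambda\), again trivialized. Bi-Lipschitz invariance gives \(\|\nabla\phi\|_{L^2(P)}\le C\|\nabla\psi\|_{L^2(P_\lambda)}\) up to lower-order connection terms. For \(\lambda\) large, \(\Psi_\lambda\) is the identity on a neighbourhood of \(B\). Indeed, \(\rho_\lambda\) only changes near \(\partial P\) relative to the definition, so I choose \(\Psi_\lambda\) to be the identity for \(t\le1/2\) and interpolate radially in \(t\in[1/2,1]\); then \(\Psi_\lambda^{-1}(B)=B\). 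On the Lipschitz domain \(P\), the standard estimate \(\|\phi\|_{L^2(P)}+\|\phi\|_{L^{2(n-1)/(n-2)}(\partial P)}\le C(\|\nabla\phi\|_{L^2(P)}+\|\phi\|_{L^2(B)})\) holds. I prove it by contradiction using the Rellich compactness theorem, combined with the Sobolev trace embedding \(W^{1,2}(P)\hookrightarrow L^{2(n-1)/(n-2)}(\partial P)\), valid for \(n\ge3\). A limit function would have zero gradient and vanish on \(B\), hence be zero. The connection terms are then absorbed by the same compactness argument applied with the fixed bounded potential.

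Finally, I push this estimate back to \(P_\lambda\) and \(\Sigma_\lambda\). The boundary map \(\Psi_\lambda|_{\partial P}=\Pi_\lambda\) is uniformly bi-Lipschitz, so the surface measures are uniformly comparable and the \(L^q(\Sigma_\lambda)\) norm is controlled by the \(L^q(\partial P)\) norm of \(\phi\). The main obstacle is making the \(\lambda\)-uniform bi-Lipschitz bounds rigorous: one needs the Lipschitz bound on \(\rho_\lambda\) from convexity, and the identity-near-\(B\) arrangement so that the \(L^2(B)\) term is unchanged. After that, everything reduces to fixed-domain Sobolev facts.
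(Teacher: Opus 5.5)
The gap is in the trace--Poincar\'e part, at the sentence saying the connection terms are ``absorbed by the same compactness argument applied with the fixed bounded potential.''

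Your overall route is sound and follows the paper's: uniformly bi-Lipschitz radial parametrizations and reduction to a fixed Lipschitz domain. The paper uses \(\Theta_\lambda:B_1\to P_\lambda\) and cites the Lipschitz extension theorem where you do an explicit radial reflection. For the extension operator your trivialization is harmless, because the full \(W^{1,2}\) norm contains \(\|\psi\|_{L^2}\).

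Write \(\nabla=d+\mathcal A\) in your trivialization. Then the pullback \(\phi=\psi\circ\Psi_\lambda\) satisfies \(d\phi+(\Psi_\lambda^*\mathcal A)\phi=\Psi_\lambda^*(\nabla\psi)\). So the potential seen on \(P\) is \(\Psi_\lambda^*\mathcal A\), which depends on \(\lambda\); it is not fixed. None of the readings of your sentence closes the estimate:
\begin{itemize}
\item Direct absorption fails. The bound \(|d\psi|\le|\nabla\psi|+C|\psi|\) puts \(C\|\psi\|_{L^2(P_\lambda)}\), the quantity being estimated, on the right with an \(O(1)\) constant.
\item A compactness argument on \(P\) with the genuinely fixed potential \(\mathcal A\) controls \(\|(d+\mathcal A)\phi\|_{L^2(P)}\). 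This differs from \(\|\Psi_\lambda^*(\nabla\psi)\|_{L^2(P)}\) by the zeroth-order term \((\mathcal A-\Psi_\lambda^*\mathcal A)\phi\), which your argument does not control.
\end{itemize}
So \(\lambda\)-uniformity is not established as written.

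The missing ingredient is Kato's inequality \(\bigl|d|\psi|\bigr|\le|\nabla\psi|\) for a metric connection. Apply the scalar fixed-domain inequality to \(|\psi|\circ\Psi_\lambda\); then no trivialization or potential appears at all. This is exactly how the paper argues.

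Alternatively, you could run the contradiction argument over sequences \(\lambda_k\to\infty\) with uniformly bounded potentials. You would pass to a weak-\(*\) limit of the potentials and use a Gronwall argument to show that the limit section, which vanishes on \(B\), vanishes identically. This works but needs more.

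A smaller point: making \(\Psi_\lambda\) the identity only on \(\{t\le 1/2\}\) does not give \(\Psi_\lambda^{-1}(B)=B\) for an arbitrary ball \(B\Subset P^\circ\), since \(B\) may lie close to \(\partial P\). You can fix this in either of two ways:
\begin{itemize}
\item Choose a threshold \(t_B<1\) with \(B\subset\{t\le t_B\}\). This is permitted because the constant in the lemma may depend on \(B\).
\item As in the paper, drop the identity requirement. Use only that \(\Psi_\lambda^{-1}(B)\) contains a ball of radius bounded below independently of \(\lambda\), together with a Poincar\'e constant that is uniform over such balls.
\end{itemize}
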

\begin{proof}
    Choose a smooth bounded domain \(\widetilde P\) containing \(P\) and compactly contained in the domain of \(g\). On \(\widetilde P\), the metric \(g\) is uniformly equivalent to the Euclidean metric, so we may work with the Euclidean metric. 

    Let \(\Theta_\lambda:B_1\to P_\lambda\) denote the radial parametrization centered at \(p_0\). As observed in the proof of Lemma~\ref{lem:smoothing}, the maps \(\Theta_\lambda\) are uniformly bi-Lipschitz. The extension theorem for Lipschitz domains therefore gives the required extension operators on \(\widetilde P\), with norms bounded independently of \(\lambda\).

    Fix an open ball \(B\Subset P^\circ\). For all sufficiently large \(\lambda\), the sets \(\Theta_\lambda^{-1}(B)\) contain balls of radius bounded below independently of \(\lambda\). The Poincar\'e and trace inequalities on \(B_1\), applied to \(|\psi|\circ\Theta_\lambda\), together with Kato's inequality and the uniform bi-Lipschitz bounds, prove the second statement.
\end{proof}

\begin{lemma}\label{lem:boundary-error-smallness}
Fix an open ball \(B\Subset P^\circ\), and let \(\mathcal V\to \widetilde P\) be a smooth Hermitian vector bundle with a compatible connection. There are numbers \(\varepsilon_\lambda>0\) such that \(\varepsilon_\lambda\to 0\) as \(\lambda\to \infty\) and for all sufficiently large \(\lambda\) and every \(\psi\in W^{1,2}(P_\lambda,\mathcal V)\),
\begin{equation}\label{eq:boundary-error-smallness}
\int_{\Sigma_\lambda}W_\lambda|\psi|^2\leq\varepsilon_\lambda\left(\int_{P_\lambda}|\nabla\psi|^2+\int_B|\psi|^2\right).
\end{equation}
\end{lemma}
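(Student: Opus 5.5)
The estimate should follow from Lemma~\ref{lem:boundary-error-bound} combined with H\"older's inequality on \(\Sigma_\lambda\) and the uniform trace estimate \eqref{eq:uniform-trace-poincare}. Set \(p=\frac{2(n-1)}{n-2}\), so that \(|\psi|^2\in L^{p/2}(\Sigma_\lambda)\) with dual exponent \(q=n-1\). H\"older then gives
\[
\int_{\Sigma_\lambda}W_\lambda|\psi|^2\leq\|W_\lambda\|_{L^{n-1}(\Sigma_\lambda)}\|\psi\|^2_{L^{p}(\Sigma_\lambda)}\leq C\|W_\lambda\|_{L^{n-1}(\Sigma_\lambda)}\Bigl(\int_{P_\lambda}|\nabla\psi|^2+\int_B|\psi|^2\Bigr).
\]
So it suffices to show \(\varepsilon_\lambda:=C\|W_\lambda\|_{L^{n-1}(\Sigma_\lambda)}\to0\). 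By \eqref{eq:boundary-error-bound}, this reduces to three bounds, one for each term.

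\textbf{The edge term.} The set \(\Sigma_\lambda\cap\mathcal N^g_{C\lambda^{-1}}(E_P)\) has \((n-1)\)-dimensional measure at most \(C\lambda^{-1}\), because \(E_P\) is a finite union of \((n-2)\)-dimensional convex sets. To see this, pull back by \(\Pi_\lambda\): it is uniformly bi-Lipschitz and moves points by at most \(C\lambda^{-1}\), so this set lies in the image of a \(C'\lambda^{-1}\)-neighborhood of \(E_P\) within \(\partial P\). That neighborhood has measure \(O(\lambda^{-1})\). Hence the \(L^{n-1}\) norm of this term is \(O(\lambda^{-1/(n-1)})\).

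\textbf{The corner term.} In the same way, \(K_P\) has dimension \(n-3\), so \(\Sigma_\lambda\cap\mathcal N^g_{C\delta_\lambda}(K_P)\) has measure \(O(\delta_\lambda^2)\). Its contribution to the norm is therefore \(O(\delta_\lambda^{2/(n-1)})\).

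\textbf{The weight term.} This is the main obstacle, and it is the reason for the log cutoff. I would estimate \(\int_{\Sigma_\lambda}w_\lambda^{n-1}\) by a layer-cake argument over dyadic shells \(\{2^{-k-1}<r\leq 2^{-k}\}\) with \(\delta_\lambda^2<2^{-k}<2\delta_\lambda\). Transferred to \(\partial P\) via \(\Pi_\lambda\), each shell lies in a set of the form \(\{d(\cdot,K_P)\leq C2^{-k}\}\), which has measure \(\leq C2^{-2k}\) since \(\operatorname{codim}_{\partial P}K_P=2\). There \(w_\lambda\leq C2^{k}/|\log\delta_\lambda|\). Thus
\[
\int_{\Sigma_\lambda}w_\lambda^{n-1}\leq\frac{C}{|\log\delta_\lambda|^{n-1}}\sum_{k}2^{k(n-1)}2^{-2k}.
\]
When \(n=3\) the sum has \(O(|\log\delta_\lambda|)\) terms of size one, giving \(O(|\log\delta_\lambda|^{-1})\to0\); this is exactly the borderline log-capacity argument. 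When \(n>3\) the sum is dominated by its largest \(k\), roughly \(2^{k}\sim\delta_\lambda^{-2}\), giving \(\delta_\lambda^{-2(n-3)}|\log\delta_\lambda|^{-(n-1)}\), which does not tend to zero.

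Consequently, for \(n\geq4\) I would avoid using \(L^{n-1}\) on \(\Sigma_\lambda\) for \(w_\lambda\). The first attempt would be a weighted Hardy-type trace inequality: \(\int_{\Sigma}\frac{|\psi|^2}{r^2}\) is controlled near a codimension-\(\geq3\) set in \(\mathbb R^n\), since \(K_P\) has codimension \(3\) in \(P_\lambda\). Concretely, I would apply a Hardy inequality near \(K_P\), namely \(\int_{P_\lambda}|\psi|^2/r^2\leq C\int|\nabla\psi|^2+\dots\), valid because the codimension is \(3\geq3\). I would combine it with a boundary trace obtained by applying the divergence theorem to the vector field \(|\psi|^2 X\), where \(X\) is a radial-type field from \(K_P\) with \(\operatorname{div}X\sim1/r\) and \(\langle X,\nu\rangle\gtrsim1/r\)-weighted, using convexity. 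The expected outcome is \(\int_\Sigma|\psi|^2/r\leq C(\int|\nabla\psi|^2+\int_B|\psi|^2)\), uniformly in \(\lambda\). Given this, \(\int w_\lambda|\psi|^2\leq|\log\delta_\lambda|^{-1}\int_\Sigma|\psi|^2/r\to0\), which handles \(w_\lambda\) in all dimensions, including \(n=3\) if the Hardy constant survives there. If uniformity of that trace-Hardy inequality across \(P_\lambda\) proves delicate, the fallback for \(n=3\) is the H\"older computation above. The other two terms are handled by H\"older as described, and their sum gives \(\varepsilon_\lambda\to0\).
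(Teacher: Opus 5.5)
Your bounds for the edge and corner indicator terms are correct, and so is your H\"older treatment of $w_\lambda$ when $n=3$. You are also right that $\|w_\lambda\|_{L^{n-1}(\Sigma_\lambda)}\not\to0$ once $n\geq4$. The gap is what comes after that: for $n\geq4$ the whole estimate rests on the uniform trace--Hardy inequality
\[
\int_{\Sigma_\lambda}\frac{|\psi|^2}{r}\leq C\Bigl(\int_{P_\lambda}|\nabla\psi|^2+\int_B|\psi|^2\Bigr),
\]
which you only announce as an ``expected outcome,'' and the vector field you suggest cannot prove it. Near $F_I$, a field radial from $K_P$ satisfies $\langle x-\pi_{F_I}(x),N_j\rangle=u_j(x)\leq0$ for every $j\in I$. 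So it is tangent to the flat pieces $\Sigma_\lambda\cap F_j$, where $w_\lambda$ is largely supported, and its normal component is nonpositive wherever $I_\lambda(x)\subset I$. There is therefore no lower bound $\langle X,\nu\rangle\geq c/r$. Note also that $|X|\sim1/r$ forces $\operatorname{div}X\sim1/r^2$, not $1/r$.

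The inequality is nevertheless true, and your route closes if you take $X=(x-p_0)/r$ instead.
\begin{itemize}
\item Since $B_{\delta/2}(p_0)\subset P_\lambda$ and $P_\lambda$ is convex, $\langle x-p_0,\nu\rangle\geq\delta/2$ on $\Sigma_\lambda$.
\item $r$ is smooth on $P_\lambda$, because $K_P\cap P_\lambda=\emptyset$.
\item By \eqref{eq:regularized-distance}, $|X|\leq C/r$ and $|\operatorname{div}X|\leq C/r^2$ uniformly in $\lambda$.
\end{itemize}
The divergence theorem for $|\psi|^2X$ and Cauchy--Schwarz then give $\int_{\Sigma_\lambda}|\psi|^2/r\leq C\int_{P_\lambda}\bigl(|\nabla\psi|^2+|\psi|^2/r^2\bigr)$. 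To control the last integral uniformly:
\begin{itemize}
\item extend $|\psi|$ via Lemma~\ref{lem:uniform-extension} and use \eqref{eq:uniform-trace-poincare};
\item bound $r^{-2}\leq C\sum_{|I|=3}d(x,L_I)^{-2}$, where $L_I\supset F_I$ is an affine subspace of dimension $n-3$;
\item apply Hardy's inequality $\int_{\mathbb R^n}v^2d(x,L_I)^{-2}\leq4\int_{\mathbb R^n}|\nabla v|^2$, which holds for all $n\geq3$.
\end{itemize}
Since $w_\lambda\leq(r|\log\delta_\lambda|)^{-1}$, this bounds the weight term with the factor $C|\log\delta_\lambda|^{-1}$.

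The paper proceeds differently and uses no Hardy inequality. It pulls back to $\partial P$ by $T_\lambda$, localizes to each $F_a$ via \eqref{eq:triple-face-distance}, and writes $F_a\subset\mathbb R^{n-3}\times\mathbb R^2$. In each two-dimensional slice it carries out exactly your $n=3$ computation: the weight has $L^2_z$-norm at most $C|\log\delta_\lambda|^{-1/2}$ and is paired with $\|v(y,\cdot,0)\|_{L^4(\mathbb R^2)}^2$. It then uses the three-dimensional trace embedding $W^{1,2}(\mathbb R^3)\hookrightarrow L^4(\mathbb R^2)$ and integrates in $y$. So the missing idea in your $L^{n-1}$ attempt is to apply H\"older fiberwise rather than on all of $\Sigma_\lambda$. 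The corrected Hardy route is a valid alternative: it avoids the face-by-face localization and gives the slightly better rate $|\log\delta_\lambda|^{-1}$ instead of $|\log\delta_\lambda|^{-1/2}$.
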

\begin{proof}
Since \(E_P\) is a finite union of \((n-2)\)-dimensional faces, Lemma~\ref{lem:smoothing} implies
\(\mathcal H^{n-1}\left(\Sigma_\lambda\cap\mathcal N_{C\lambda^{-1}}^g(E_P) \right)\leq C\lambda^{-1}\). Hence H\"older's inequality and \eqref{eq:uniform-trace-poincare} give
\begin{equation}\label{eq:edge-error-estimate}
\int_{\Sigma_\lambda}\mathbf 1_{\mathcal N_{C\lambda^{-1}}^g(E_P)}|\psi|^2\leq C\lambda^{-\frac{1}{n-1}}\left(\int_{P_\lambda}|\nabla\psi|^2+\int_B|\psi|^2\right).
\end{equation}
Near \(K_P\), let \(x\in F_a\) satisfy \(d(x,K_P)<\rho_0\), where \(\rho_0>0\) is smaller than the distance between any two disjoint faces. Choose \(J\) with \(|J|=3\) such that \(d(x, F_J)=d(x,K_P)\). Then \(F_J\cap F_a\neq\emptyset\). Choose \(I\subset J\cup\{a\}\) with \(|I|=3\) and \(a\in I\). Then \(F_I\neq\emptyset\), and \eqref{eq:face-distance} gives
\[
d(x,F_I)\leq d(x,F_{J\cup\{a\}})\leq C\max_{j\in J}|u_j(x)|\leq Cd(x, F_J)=Cd(x,K_P).
\]
Since \(F_I\subset K_P\) whenever \(|I|=3\), it follows that
\begin{equation}\label{eq:triple-face-distance}
d(x,K_P)\leq\min_{\substack{|I|=3,a\in I\\ F_I\neq\emptyset}}d(x,F_I)\leq Cd(x,K_P).
\end{equation}
For \(q\in \partial P\) and \(s\in[0,1]\), set 
\[
T_\lambda\left(p_0+s(q-p_0)\right)=p_0+s\left(\Pi_\lambda(q)-p_0\right).
\]
By Lemma~\ref{lem:smoothing}, \(T_\lambda:P\to P_\lambda\) is uniformly bi-Lipschitz and \(\sup_{x\in P}d_g(T_\lambda(x),x)\leq C\lambda^{-1}\). Thus if \(T_\lambda(x)\in\mathcal N_{C\delta_\lambda}^g(K_P)\), then \(d(x,K_P)< C\delta_\lambda\). On the support of \(w_\lambda\circ T_\lambda\), \eqref{eq:regularized-distance} and \(\lambda^{-1}=o(\delta_\lambda^2)\) give
\[
c\delta_\lambda^2<d(x,K_P)<C\delta_\lambda,\qquad cd(x,K_P)\leq r(T_\lambda(x))\leq Cd(x,K_P).
\]
Combining these with \eqref{eq:triple-face-distance}, we obtain for \(x\in F_a\),
\[
\left(\mathbf 1_{\mathcal N_{C\delta_\lambda}^g(K_P)}+w_\lambda\right)(T_\lambda(x))\leq C\sum_{\substack{|I|=3,a\in I\\F_I\neq\emptyset}}\left(\mathbf 1_{\{d(x,F_I)<C\delta_\lambda\}}+\frac{\mathbf 1_{\{c\delta_\lambda^2<d(x,F_I)<C\delta_\lambda\}}}{d(x,F_I)|\log\delta_\lambda|}\right).
\]
Fix \(I\) in the sum on the right. Note that \(F_I\subset F_a\) and \(\operatorname{codim }F_I\geq 3\). Write \(\mathbb R^n=\mathbb R^{n-3}\times\mathbb R^2\times \mathbb R\), with coordinates \((y,z,t)\), so that \(F_a\subset\{t=0\}\) and \(F_I\subset\{z=t=0\}\). Then
\[
d\left((y,z,0), F_I\right)^2=d\left((y,0,0), F_I\right)^2+|z|^2.
\]
Of the two terms indexed by \(I\), the first has \(L_z^2\)-norm at most \(C\delta_\lambda\), uniformly in \(y\). For the second, one has
\[
\frac{1}{|\log\delta_\lambda|^2}\int_{c\delta_\lambda^2<d((y,z,0),F_I)<C\delta_\lambda}\frac{dz}{d((y,z,0),F_I)^2}\leq \frac{C}{|\log\delta_\lambda|^2}\int_{c\delta_\lambda^2}^{C\delta_\lambda}\frac{ds}{s}\leq\frac{C}{|\log\delta_\lambda|}.
\]

By \eqref{eq:uniform-trace-poincare} and the uniform bi-Lipschitz bounds for \(T_\lambda\), \(|\psi|\circ T_\lambda\) has an extension \(v\in W^{1,2}(\mathbb R^n)\) satisfying
\[
\|v\|_{W^{1,2}(\mathbb R^n)}^2\leq C\left(\|\nabla\psi\|_{L^2(P_\lambda)}^2+\|\psi\|_{L^2(B)}^2\right).
\]
For almost every \(y\), the trace inequality gives 
\[
\|v(y,\cdot,0)\|_{L^4(\mathbb R^2)}^2\leq C\|v(y,\cdot,\cdot)\|_{W^{1,2}(\mathbb R^3)}^2.
\]
It follows that 
\begin{equation}\label{eq:higher-codimension-error-estimate}
\begin{aligned}
    \int_{\Sigma_\lambda}\left(\mathbf 1_{\mathcal N_{C\delta_\lambda}^g(K_P)}+w_\lambda\right)|\psi|^2\leq&C\left(\delta_\lambda+|\log\delta_\lambda|^{-\frac{1}{2}}\right)\int_{\mathbb R^{n-3}}\|v(y,\cdot,0)\|_{L^4(\mathbb R^2)}^2dy\\
    \leq&C\left(\delta_\lambda+|\log\delta_\lambda|^{-\frac{1}{2}}\right)\|v\|_{W^{1,2}(\mathbb R^n)}^2\\
    \leq&C\left(\delta_\lambda+|\log\delta_\lambda|^{-\frac{1}{2}}\right)\left(\|\nabla\psi\|_{L^2(P_\lambda)}^2+\|\psi\|_{L^2(B)}^2\right).
\end{aligned}
\end{equation}
Combining \eqref{eq:boundary-error-bound}, \eqref{eq:edge-error-estimate}, \eqref{eq:higher-codimension-error-estimate}, we obtain
\[
\int_{\Sigma_\lambda}W_\lambda|\psi|^2\leq C\left(\lambda^{-\frac{1}{n-1}}+\delta_\lambda+|\log\delta_\lambda|^{-\frac{1}{2}}\right)\left(\int_{P_\lambda}|\nabla\psi|^2+\int_B|\psi|^2\right).
\]
Since \(\delta_\lambda=\lambda^{-1/4}\), this proves \eqref{eq:boundary-error-smallness}.
\end{proof}

\subsection{Passage to the limit and rigidity}

Assume first that \(n\) is odd, and choose an irreducible Hermitian \(\mathrm{Cl}(\mathbb R^n)\)-module \((\Delta_n,\omega)\) such that
\[
i^{\frac{n+1}{2}}\omega(E_1)\cdots\omega(E_n)=I
\]
for every positively oriented orthonormal basis \((E_1,\ldots,E_n)\) of \(\mathbb R^n\). We use the same convention for Clifford multiplication on the spinor bundle \(S_Y\) of \((Y,g)\), so that 
\[
i^{\frac{n+1}{2}}c(e_1)\cdots c(e_n)=I
\]
for every positively oriented orthonormal frame \(e_1,\ldots,e_n\) of \(TY\). Here \(I\) denotes the identity endomorphism of \(\Delta_n\) in the first formula and of \(S_Y\) in the second.

We shall use the following existence result.

\begin{proposition}\label{prop:dirac-existence}
Let \(\Omega\subset\mathbb R^n\) be a smooth compact convex domain, and let \(g\) be a smooth metric on a neighborhood of \(\overline{\Omega}\). If \(\eta:\partial\Omega\to\mathbb S^{n-1}\) is smooth and \(\operatorname{deg }\eta=1\), then there exists a nonzero smooth section \(A\) of \(\operatorname{Hom}(\Delta_n,S_\Omega)\) satisfying
\[
DA=0\quad\text{in }\Omega,\qquad \chi_\eta A=A\quad\text{on }\partial\Omega.
\]
\end{proposition}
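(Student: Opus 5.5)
The plan is an index-theoretic existence argument, following \cite[Section~2]{Brendle}. The first step is to check that the boundary condition \(\chi_\eta A=A\) is a local elliptic (Lopatinskii--Shapiro) boundary condition for \(D\) acting on sections of \(\operatorname{Hom}(\Delta_n,S_\Omega)\cong S_\Omega\otimes\Delta_n^*\). We already know \(\chi_\eta^*=\chi_\eta\). Next, \(\chi_\eta^2=c(\nu)^2A\,\omega(\eta)^2=A\), since \(c(\nu)^2=-1\) and \(\omega(\eta)^2=-1\). Hence \(\chi_\eta\) is a self-adjoint involution. Moreover, \(\chi_\eta\) anticommutes with \(c(\nu)c(e_j)\) for tangential \(e_j\). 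Therefore it interchanges the positive and negative spectral subspaces of the tangential symbol, which is exactly the ellipticity criterion. It follows that \(D\) with domain \(\{A\in W^{1,2}:\chi_\eta A=A\}\) is Fredholm, and elliptic regularity makes its kernel consist of smooth sections. So it suffices to show that this boundary value problem has nonzero index. The next step is to show that the index equals \(\deg\eta\); since \(\deg\eta=1\), the kernel is then nontrivial.

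To compute the index I would use deformation invariance. The index is unchanged under continuous deformations of the metric \(g\), of the domain, and of \(\eta\) within smooth maps \(\partial\Omega\to\mathbb S^{n-1}\), because ellipticity is preserved along such paths. The metric can be deformed to the Euclidean metric through \((1-t)g+t\,g_{\mathrm{eucl}}\), identifying the spinor bundles by parallel transport. Since \(\Omega\) is convex, it can be deformed radially to the unit ball \(B\). The map \(\eta\) then becomes a degree-one map \(\partial B\to\mathbb S^{n-1}\), which is homotopic to the identity. So it remains to compute the index for the Euclidean unit ball with \(\eta=\nu\).

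In that model case, \(S_B\cong B\times\Delta_n\), and sections are matrix-valued functions \(A:B\to\operatorname{End}(\Delta_n)\). This is where the orientation convention \(i^{(n+1)/2}\omega(E_1)\cdots\omega(E_n)=I\), imposed on both \(c\) and \(\omega\), enters: it ensures that \(c\) and \(\omega\) are the same representation, so \(c(\nu)=\omega(\nu)\). The kernel then consists of the constant endomorphisms \(A\) with \(-\omega(\nu)A\omega(\nu)=A\) for all \(\nu\), that is, \(A\) commuting with every \(\omega(v)\). By irreducibility these are exactly the multiples of the identity, so the kernel is one-dimensional. To see that there are no other solutions, I would use the integral identity of Proposition~\ref{prop:dirac-boundary-estimate} with \(R=0\), \(H=n-1\) and \(\|d\eta\|_{\operatorname{tr}}=n-1\). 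It forces \(\nabla A=0\), so every solution is constant.

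The index also needs the cokernel. The adjoint boundary condition is \(\chi_\eta A=A\) again, because the formal adjoint of \(D\) is \(D\) itself and the relevant complementary projection is handled by self-adjointness. So the cokernel coincides with the kernel of the same problem, and I would need to reconcile this with the index being nonzero. The resolution I expect is that one should use the \(\mathbb Z/2\)-graded version, splitting \(\operatorname{Hom}(\Delta_n,S)\) by the commuting action of the grading built from \(\omega\) and \(c\) (as Brendle does), and then compute the index of the graded half. This gives index \(1\) in the ball case, and more generally the index equals \(\deg\eta\) by the homotopy argument. I expect this bookkeeping — the grading and the identification of the index with the degree, including the sign convention that makes it \(+1\) rather than \(-1\) — to be the main obstacle. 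Following Brendle, I would settle it by the explicit computation in the model case together with homotopy invariance.
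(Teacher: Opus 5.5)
Your ellipticity check, the homotopy reduction to the Euclidean unit ball with \(\eta=\nu\), and the identification of the model kernel with the scalars are all fine. The gap is in the index computation, which is the real content here. The boundary problem is \emph{not} self-adjoint, so the cokernel is not the kernel of the same problem.

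Left Clifford multiplication by \(\nu\) commutes with \(\chi_\eta\), since \(\chi_\eta(c(\nu)A)=A\omega(\eta)=c(\nu)\chi_\eta A\). Green's formula \(\int_Y(\langle DA,B\rangle-\langle A,DB\rangle)=-\int_\Sigma\langle A,c(\nu)B\rangle\) then says that the adjoint boundary condition is \(c(\nu)B\perp\ker(\chi_\eta-1)\), i.e.\ \(\chi_\eta B=-B\). Since \(\chi_{-\eta}=-\chi_\eta\), this is exactly the condition \(\chi_{-\eta}B=B\). The contradiction you ran into (cokernel equal to kernel, forcing index zero) comes from this error. The \(\mathbb Z/2\)-grading you invoke to escape it is never specified and is not needed. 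As written, the proposal therefore never shows that the index is nonzero.

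The fix is short. In the model case the cokernel consists of solutions with \(\chi_{-\nu}B=B\). Proposition~\ref{prop:dirac-boundary-estimate} applies to \(-\nu\) as well, since \(R_g=0\) and \(\|d(-\nu)\|_{\operatorname{tr}}=n-1=H_\Sigma^g\). So \(B\) is constant and satisfies \(\omega(\nu)B=-B\omega(\nu)\) for every unit \(\nu\). Then \(B\) anticommutes with \(\omega(E_1)\cdots\omega(E_n)\), a product of an odd number of generators. By the orientation convention this product is the nonzero scalar \(i^{-(n+1)/2}I\), hence \(B=0\). The model index is therefore \(1-0=1\), and homotopy invariance gives index \(1\), hence a nontrivial kernel, for every degree-one \(\eta\).

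With this correction your argument becomes a self-contained proof of what the paper obtains by citation. The paper only notes that \(\eta\) and the Gauss map both have degree one, hence are homotopic, and quotes \cite[Proposition~2.15]{Brendle} for positivity of the index.
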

\begin{proof}
    Note that \(\partial\Omega\) is diffeomorphic to \(\mathbb S^{n-1}\) and that \(\eta\) and the Gauss map of \(\partial\Omega\) both have degree one. Hence these maps are homotopic. By \cite[Proposition~2.15]{Brendle}, the boundary problem has positive Fredholm index and hence a nontrivial kernel.
\end{proof}

We now pass to the limit along the smooth approximations \(P_\lambda\).
\begin{proposition}\label{prop:parallel-limit}
There exists a nonzero parallel section \(A\) of \(\operatorname{Hom}(\Delta_n,S_{P^\circ})\) which extends smoothly to a neighborhood of \(P\) and satisfies 
\begin{equation}\label{eq:boundary-relation}
c(\nu_a)A=A\omega(N_a)\qquad\text{on }F_a
\end{equation}
for every \(a\).
\end{proposition}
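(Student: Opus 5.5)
For each large \(\lambda\), I would apply Proposition~\ref{prop:dirac-existence} to \(\Omega=P_\lambda\) and \(\eta=\eta_\lambda\). This is possible because \(P_\lambda\) is smooth and convex by Lemma~\ref{lem:smoothing}, and \(\eta_\lambda\) is smooth of degree one by Lemma~\ref{lem:eta}. The result is a nonzero solution \(A_\lambda\) with \(DA_\lambda=0\) in \(P_\lambda\) and \(\chi_{\eta_\lambda}A_\lambda=A_\lambda\) on \(\Sigma_\lambda\). Fix a ball \(B\Subset P^\circ\); note that \(B\subset P_\lambda\) for large \(\lambda\). Since \(A_\lambda\) is harmonic for \(D\), unique continuation gives \(\int_B|A_\lambda|^2>0\), so I normalize \(\int_B|A_\lambda|^2=1\).

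Next I combine Proposition~\ref{prop:dirac-boundary-estimate} with Lemma~\ref{lem:boundary-error-smallness} for \(\mathcal V=\operatorname{Hom}(\Delta_n,S)\) (using the trivialization of the spinor bundle over the ball-like \(\widetilde P\)). Since \(R_g\ge0\), this gives
\[
\int_{P_\lambda}|\nabla A_\lambda|^2\le\tfrac12\varepsilon_\lambda\Bigl(\int_{P_\lambda}|\nabla A_\lambda|^2+1\Bigr),
\]
and hence \(\int_{P_\lambda}|\nabla A_\lambda|^2\le C\varepsilon_\lambda\to0\). By \eqref{eq:uniform-trace-poincare}, the norms \(\|A_\lambda\|_{W^{1,2}(P_\lambda)}\) are uniformly bounded. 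Using the extension operators \(E_\lambda\) from Lemma~\ref{lem:uniform-extension} and Rellich compactness, I pass to a subsequence with \(E_\lambda A_\lambda\to A\) weakly in \(W^{1,2}(\widetilde P)\) and strongly in \(L^2\). Lower semicontinuity on each compact \(K\Subset P^\circ\), which lies in \(P_\lambda\) eventually, gives \(\nabla A=0\) on \(P^\circ\), and \(\int_B|A|^2=1\). A parallel section is smooth, and in local parallel frames it is constant, so it extends smoothly to a neighborhood of \(P\); for instance, one can parallel transport along rays from \(p_0\) into a star-shaped neighborhood.

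The main obstacle is the boundary relation \eqref{eq:boundary-relation}. Fix \(a\) and \(V\Subset F_a^\circ\). By Lemmas~\ref{lem:smoothing} and~\ref{lem:eta}, for \(\lambda\ge\lambda_V\) we have \(\Sigma_\lambda=F_a\) near \(\overline V\), \(\nu_\lambda=\nu_a\), and \(\eta_\lambda=N_a\) on \(V\). The boundary condition therefore reads
\[
-c(\nu_a)A_\lambda\omega(N_a)=A_\lambda\qquad\text{on }V,
\]
with a fixed operator on a fixed piece of boundary. I would take a fixed Lipschitz subdomain \(U\subset P_\lambda\) for all large \(\lambda\) whose boundary contains \(V\), for example \(U=P\cap\{\text{a small neighborhood of }V\}\) intersected with \(\{u_b\le-\varepsilon\}\) for \(b\neq a\). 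On \(U\), \(A_\lambda\to A\) weakly in \(W^{1,2}(U)\), so the traces converge weakly in \(L^2(V)\) by compactness of the trace map (or simply continuity of the trace). The identity then passes to the limit, giving \(-c(\nu_a)A\omega(N_a)=A\) on \(V\). Multiplying on the left by \(c(\nu_a)\) and using \(c(\nu_a)^2=-1\) yields \(A\omega(N_a)=c(\nu_a)A\). Finally, I exhaust \(F_a^\circ\) by such sets \(V\) and use continuity of the smooth extension to obtain \eqref{eq:boundary-relation} on all of \(F_a\). Care is needed that \(A\) is nonzero, which the \(B\)-normalization guarantees, and that the subsequence is chosen once for all \(V\), which a countable exhaustion and a diagonal argument handle.
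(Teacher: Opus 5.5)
Your proposal is correct and follows essentially the same route as the paper: existence of \(A_\lambda\) on \(P_\lambda\), the energy bound from Proposition~\ref{prop:dirac-boundary-estimate} and Lemma~\ref{lem:boundary-error-smallness}, uniform extension plus Rellich compactness, passing the boundary condition on \(V\Subset F_a^\circ\) through traces, and extending \(A\) by parallel transport along rays from \(p_0\). The differences are minor: you normalize \(\int_B|A_\lambda|^2=1\), with the \(L^2\) bound coming from \eqref{eq:uniform-trace-poincare}. This is justified by unique continuation, or more directly because \(\int_B|A_\lambda|^2=0\) would force \(\nabla A_\lambda=0\) by the same estimate, and it makes \(A\neq0\) immediate, whereas the paper normalizes on \(P_\lambda\) and recovers \(\int_P|A|^2=1\) from strong \(L^2\) convergence. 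Also, your final diagonal argument is unnecessary, since the subsequence fixed when extracting \(A\) already works for every \(V\).
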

\begin{proof}
Let \(\widetilde P\) be as in Lemma~\ref{lem:uniform-extension}, and let \(S_{\widetilde P}\) be the spinor bundle of \((\widetilde P, g)\) and \(S_{P_\lambda}=S_{\widetilde P}|_{P_\lambda}\). By Lemma~\ref{lem:smoothing} and Lemma~\ref{lem:eta}, \(P_\lambda\) is a smooth compact convex domain and \(\operatorname{deg }\eta_\lambda=1\). Proposition~\ref{prop:dirac-existence} therefore gives, after rescaling, a nonzero smooth section \(A_\lambda\) of \(\operatorname{Hom}(\Delta_n,S_{P_\lambda})\) satisfying
\begin{equation}\label{eq:A-lambda}
    DA_\lambda=0\quad\text{in } P_\lambda,\qquad \chi_{\eta_\lambda}A_\lambda=A_\lambda\quad\text{on }\Sigma_\lambda, \qquad\int_{P_\lambda}|A_\lambda|^2=1.
\end{equation}
Fix an open ball \(B\Subset P^\circ\). For all sufficiently large \(\lambda\), one has \(B\subset P_\lambda\), and Proposition~\ref{prop:dirac-boundary-estimate} and Lemma~\ref{lem:boundary-error-smallness}  give
\[
\int_{P_\lambda}|\nabla A_\lambda|^2+\frac{1}{4}\int_{P_\lambda}R_g|A_\lambda|^2\leq\frac{\varepsilon_\lambda}{2}\left(\int_{P_\lambda}|\nabla A_\lambda|^2+\int_B|A_\lambda|^2\right).
\]
Since \(R_g\geq 0\) and by \eqref{eq:A-lambda}, \(\int_B|A_\lambda|^2\leq 1\),
\begin{equation}\label{eq:A-energy}
\int_{P_\lambda}|\nabla A_\lambda|^2\leq\frac{\varepsilon_\lambda}{2-\varepsilon_\lambda}\longrightarrow 0.
\end{equation}
Lemma~\ref{lem:uniform-extension} provides extensions of \(A_\lambda\) to \(\widetilde P\) that are bounded in \(W^{1,2}(\widetilde P)\). After passing to a subsequence,
\begin{equation}\label{eq:A-convergence}
A_\lambda\rightharpoonup A\quad\text{in }W^{1,2}(\widetilde P),\qquad A_\lambda\to A\quad\text{in }L^2(\widetilde P).
\end{equation}
Every compact subset of \(P^\circ\) is contained in \(P_\lambda\) for all sufficiently large \(\lambda\). Hence \eqref{eq:A-energy} implies that \(\nabla A=0\) on \(P^\circ\), so \(A\) is smooth and parallel there.

Since \(P_\lambda\subset P\) and, by Lemma~\ref{lem:smoothing}, \(d_H^g(\Sigma_\lambda,\partial P)\to 0\), the strong \(L^2\) convergence and \eqref{eq:A-lambda} give
\[
\int_P|A|^2=\lim_{\lambda\to\infty}\int_{P_\lambda}|A_\lambda|^2=1,
\]
so \(A\neq 0\). 

Fix \(a\) and \(V\Subset F_a^\circ\). By Lemma~\ref{lem:smoothing} and Lemma~\ref{lem:eta}, for all sufficiently large \(\lambda\), \(\Sigma_\lambda\) coincides with \(F_a\) near \(\overline{V}\) and \(\eta_\lambda=N_a\) on \(V\). Hence \eqref{eq:A-lambda} gives \(-c(\nu_a)A_\lambda\omega(N_a)=A_\lambda\), or equivalently, \(c(\nu_a)A_\lambda=A_\lambda\omega(N_a)\) on \(V\). The compactness of the trace embedding, together with \eqref{eq:A-convergence}, gives \(c(\nu_a)A=A\omega(N_a)\) on \(V\). Fix \(p_0\in P^\circ\). Parallel transport of \(A(p_0)\) along the line segments from \(p_0\) extends \(A\) smoothly to a neighborhood of \(P\). Hence \eqref{eq:boundary-relation} holds on \(F_a\) by continuity.
\end{proof}

\begin{proof}[Proof of Theorem~\ref{thm:main}]
    Let \(A\) be the section given by Proposition~\ref{prop:parallel-limit}. Since \(A\) is parallel, \(A^*A\in\operatorname{End}(\Delta_n)\) is constant. Taking the adjoint of \eqref{eq:boundary-relation} gives
    \[
    A^*c(\nu_a)=\omega(N_a)A^*\qquad\text{on }F_a.
    \]
Together with \eqref{eq:boundary-relation}, this gives
\[
\omega(N_a)A^*A=A^*A\omega(N_a)\qquad\text{for every }a.
\]
Since \(P\) is compact and convex, \(N_1,\ldots, N_m\) span \(\mathbb R^n\). Hence \(A^*A\) commutes with every \(\omega(v)\). By irreducibility and Schur's lemma, \(A^*A\) is a positive scalar multiple of \(I\). After rescaling, \(A^*A=I\). As \(\Delta_n\) and \(S_{P^\circ}\) have the same rank, \(A\) is unitary. Thus the image under \(A\) of an orthonormal basis of \(\Delta_n\) is a parallel orthonormal frame of \(S_{P^\circ}\). Thus the spin connection is flat. For a local orthonormal frame \(e_1,\ldots,e_n\), its curvature satisfies
\[
0=\frac{1}{4}\sum_{i,j=1}^n\langle\operatorname{Rm}_g(X,Y)e_i,e_j\rangle c(e_i)c(e_j).
\]
The spin representation of \(\mathfrak{so}(n)\) is faithful for \(n\geq 3\). Hence \(\operatorname{Rm}_g=0\).

Since the extension of \(A\) is smooth and \(\nabla A=0\) on \(P^\circ\), continuity gives \(\nabla A=0\) on \(P\). For \(Y\in \Gamma(TF_a^\circ)\), differentiating \eqref{eq:boundary-relation} and using that \(N_a\) is constant gives \(c(\nabla_Y^g\nu_a)A=0\). Since \(A\) is unitary, \(c(\nabla_Y^g\nu_a)=0\). The Clifford relation \(c(v)^2=-|v|_g^2I\) gives \(\nabla_Y^g\nu_a=0\). Thus \(F_a\) is totally geodesic.

Suppose that \(F_a\cap F_b\neq\emptyset\). Using \eqref{eq:boundary-relation}, we obtain on \(F_a\cap F_b\)
\[
\begin{aligned}
    -2\langle\nu_a,\nu_b\rangle_gI=&c(\nu_a)c(\nu_b)+c(\nu_b)c(\nu_a)\\
    =&A\left(\omega(N_a)\omega(N_b)+\omega(N_b)\omega(N_a)\right)A^{-1}=-2\langle N_a,N_b\rangle I.
\end{aligned}
\]
Therefore
\[
\langle\nu_a,\nu_b\rangle_g=\langle N_a,N_b\rangle\qquad\text{on }F_a\cap F_b.
\]
This proves the theorem when \(n\) is odd. For \(n\) even, consider \((P\times[-1,1],g+dt^2)\). The odd-dimensional case applies, and restricting its conclusions to \(P\times\{0\}\) proves the theorem.
\end{proof}

\end{document}